\numberwithin{equation}{section}
\newcommand{\be}{\begin{equation}}
\newcommand{\ee}{\end{equation}}
\newcommand{\bea}{\begin{eqnarray}}
\newcommand{\eea}{\end{eqnarray}}
\newtheorem{thm}{Theorem}[section]
\newtheorem{lem}[thm]{Lemma}
\newtheorem{prop}[thm]{Proposition}
\newtheorem{defn}[thm]{Definition}
\begin{document}

\title{{\bf Low regularity global well-posedness for the two-dimensional
Zakharov
system}
\thanks{Two authors (D.F. and S.Z.) are supported by
    NSFC 10571158}}

\author{ {\bf Daoyuan Fang}\\ Department of Mathematics,
  Zhejiang University, \\
Hangzhou 310027, China\\
 { dyf@zju.edu.cn}\\
{\bf Hartmut Pecher}\\
Fachbereich Mathematik und Naturwissenschaften\\
Bergische Universit\"at Wuppertal\\
42097 Wuppertal,
Germany\\
 { pecher@math.uni-wuppertal.de}\\
{\bf Sijia Zhong}\\ Department of Mathematics, Southeast
    University, \\
Nanjing 210096, China\\
 { sijiazhong@gmail.com}
}

\maketitle \baselineskip 17pt
\begin{abstract}
The two-dimensional Zakharov system is shown to have a unique global
solution for data without finite energy if the $L^2$-norm of the
Schr\"odinger part is small enough. The proof uses a refined I-method
originally initiated by Colliander, Keel, Staffilani, Takaoka and Tao. A
polynomial growth bound for the solution is also given.

\renewcommand{\thefootnote}{\fnsymbol{footnote}}
\footnotetext{\hspace{-1.8em}{\it AMS 2000 subject classification: Primary:}
35Q55, 35L70 \\
{\it Key words and phrases:} Zakharov system in
space dimension two; global well-posedness. }

\end{abstract}

%
\section{Introduction}
Consider the Zakharov system in space
dimension two: \be\left\{
\begin{array}{ll}
iu_t+\Delta u=nu, \label{1.1}\\
\Box n=n_{tt}-\triangle n=\triangle |u|^2, \\
u(0,x)=u_0(x),\ n(0,x)=n_0(x),\ n_t(0,x)=n_1(x),
\end{array}
\right.\ee where $\triangle$ is the Laplacian in
$\mathbb{R}^2$, $u:[0,T)\times\mathbb{R}^2\rightarrow\mathbb{C}$,
$n:[0,T)\times\mathbb{R}^2\rightarrow\mathbb{R}$.

The Zakharov system was introduced in \cite{Z} to describe the long
wave Langmuir turbulence in a plasma. The function $u$ represents
the slowly varying envelope of the rapidly oscillating electric
field, and the function $n$ denotes the deviation of the ion density
from its mean value. We assume $u_0\in
H^s$, $n_0\in H^l$ and
$n_1\in H^{l-1}$ for some real $s$, $l$.

We consider the Hamiltonian case, that
is, we assume
\be
\label{HC}
\exists v_0 \in L^2(\mathbb{R}^2,\mathbb{R}^2) :
n_1=-\nabla \cdot v_0.
\ee
If $(u,n)$ is a solution of (\ref{1.1}) we have in this case
$$ n_t(t) = -\nabla \cdot(v_0 - \int_0^t \nabla(n+|u|^2)ds) = -\nabla \cdot v(t)
\, , $$
where
$$ v(t) = v_0 - \int_0^t \nabla(n+|u|^2)ds \, , $$
so that
$$ v_t = - \nabla(n+|u|^2) \quad , \quad v(0) = v_0 \, . $$
Thus (\ref{1.1}) can be written in the
form \be\left \{\begin{array}{ll}
iu_t+\Delta u=nu, \label{1.2}\\
n_t=-\nabla\cdot v, \\
v_t=-\nabla n-\nabla|u|^2, \\
u(0,x)=u_0(x),\ n(0,x)=n_0(x),\ v(0,x)=v_0(x).
\end{array}
\right.\ee

This system has two conserved quantities, namely besides mass conservation also
energy conservation (cf. (\ref{MC}) and (\ref{EC}) below).

In one space dimension, the best result with minimal regularity assumptions on
the data was proven by Colliander,
Holmer and Tzirakis \cite{CHT}, who showed global well-posedness in
the case $(s,l) = (0,-1/2)$, the largest $L^2$-based Sobolev space
where local existence is known to hold.

In two space dimensions, Proposition 1.1 of \cite{GTV} tells us that
 the Cauchy problem
(\ref{1.1}) with $(u_0,n_0,n_1)\in H^s\times H^l\times H^{l-1}$ is
locally well posed if $l \ge 0$ and $2s-(l+1)\ge 0$.
Therefore the lowest admissible value of $(s,l)$ is $(\frac{1}{2},0)$.

The main result of our paper now shows that in the Hamiltonian case
these local solutions exist globally, if $ s > 3/4 $ and $ l = 0 $ ,
provided the datum $u_0$ satisfies $ \|u_0\|_{L^2} < \|Q\|_{L^2} $ ,
where $Q$ denotes the ground state of the equation $\Delta Q - Q
+|Q|^2 Q = 0 $ . More precisely we prove
\begin{thm}
\label{thm}
Assume $(u_0,n_0,n_1) \in H^s \times L^2 \times \Lambda^{-1} L^2 $ ,
where $n_1$ fulfills (\ref{HC}), $ 1 > s > 3/4
$ and
 $ \|u_0\|_{L^2} < \|Q\|_{L^2} $. Here $\Lambda$ denotes the operator
$\sqrt{-\Delta}$. Then the system
(\ref{1.1})
has a unique global solution. More precisely, for any $T>0$ there exists a
unique solution
$$(u,n,\Lambda^{-1}n_t)\in X^{s,\frac{1}{2}+}[0,T] \times
\tilde{X}^{0,\frac{1}{2}+}[0,T]
\times \tilde{X}^{0,\frac{1}{2}+}[0,T] $$
 where the spaces $X^{s,b}$ are defined below, and
$\tilde{X}^{0,\frac{1}{2}+}[0,T] :=
X^{0,\frac{1}{2}+}_+[0,T] + X^{0,\frac{1}{2}+}_-[0,T]$. This solution
satisfies
$$(u,n,\Lambda^{-1}n_t) \in C^0([0,T],H^s(\mathbb{R}^2)\times
L^2(\mathbb{R}^2) \times
L^2(\mathbb{R}^2)) $$
and
$$ \|u(t)\|_{H^s} + \|n(t)\|_{L^2} + \|\Lambda^{-1}n_t(t)\|_{L^2} \lesssim
(1+T)^{\frac{1-s}{2s-\frac{3}{2}}+} \, .$$
\end{thm}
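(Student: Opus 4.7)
The plan is to follow the I-method of Colliander--Keel--Staffilani--Takaoka--Tao, adapted to the Zakharov system. Introduce the Fourier multiplier $I = I_N$ defined by $\widehat{Iu}(\xi) = m_N(\xi)\widehat{u}(\xi)$, where $m_N(\xi) = 1$ for $|\xi| \le N$ and $m_N(\xi) = (N/|\xi|)^{1-s}$ for $|\xi| \ge 2N$, with smooth interpolation. Then $I$ maps $H^s$ into $H^1$ with $\|Iu\|_{H^1} \lesssim N^{1-s}\|u\|_{H^s}$, and conversely $\|u\|_{H^s} \lesssim \|Iu\|_{H^1}$. Apply $I$ to the Schr\"odinger part of (\ref{1.2}) to obtain an equation for $(Iu,n,v)$, and work in the modified solution spaces $X^{1,\frac{1}{2}+}_I$, $\tilde{X}^{0,\frac{1}{2}+}$ in which the local well-posedness of Ginibre--Tsutsumi--Velo transfers to an interval of length $\delta \sim 1$ depending only on the data norms $\|Iu_0\|_{H^1}$, $\|n_0\|_{L^2}$, $\|v_0\|_{L^2}$.

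The key quantity is the modified energy
$$
E(Iu,n,v) = \|\nabla Iu\|_{L^2}^2 + \tfrac{1}{2}\|n\|_{L^2}^2 + \tfrac{1}{2}\|v\|_{L^2}^2 + \int n\,|Iu|^2\,dx \, ,
$$
together with mass $\|Iu\|_{L^2} \le \|u_0\|_{L^2}$. First, I would show that $E$ plus mass yields a two-sided a priori bound on $\|\nabla Iu\|_{L^2}^2 + \|n\|_{L^2}^2 + \|v\|_{L^2}^2$. The sign-indefinite term is estimated by Cauchy--Schwarz and the sharp Weinstein/Gagliardo--Nirenberg inequality $\|f\|_{L^4}^4 \le \frac{2}{\|Q\|_{L^2}^2}\|f\|_{L^2}^2\|\nabla f\|_{L^2}^2$, together with the hypothesis $\|u_0\|_{L^2}<\|Q\|_{L^2}$. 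This gives a coercive lower bound for $E$ in terms of the $H^1\times L^2\times L^2$-norm of $(Iu,n,v)$ independent of $t$.

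The heart of the proof is an \emph{almost conservation law}: on any local existence interval $[t_0,t_0+\delta]$ one has
$$
\bigl|E(Iu,n,v)(t_0+\delta) - E(Iu,n,v)(t_0)\bigr| \lesssim N^{-\alpha}
$$
for some $\alpha>0$. Differentiating $E$ along the flow, the time derivatives of the quadratic terms cancel with those of the potential term modulo commutators arising from the fact that $I$ does not commute with multiplication. What remains is a quartic expression in $Iu$ and a cubic expression coupling $n$, $v$, $Iu$, each carrying a commutator symbol $1-m_N(\xi_1+\xi_2)/\bigl(m_N(\xi_1)m_N(\xi_2)\bigr)$ or similar. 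These are estimated by multilinear $X^{s,b}$-type inequalities on the intervals of local existence; the dyadic analysis (Case-by-case on frequency interactions high-high, high-low, $\dots$) should yield the bound $N^{-(2s-\frac{3}{2})+}$. This is the \textbf{main obstacle}: pushing the exponent to $\alpha = 2s-\tfrac{3}{2}$ explains precisely the threshold $s>3/4$ and fixes the growth rate in the theorem.

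Given the almost conservation bound, iterate on $\sim T/\delta$ consecutive intervals; choosing $N = N(T)$ so that $(T/\delta)\, N^{-\alpha} \ll 1$ keeps $E(Iu,n,v)$ within a constant factor of its initial value on $[0,T]$. This yields unconditional global existence in the $I$-modified class and hence in $H^s\times L^2\times \Lambda^{-1}L^2$. Translating back via $\|u(t)\|_{H^s}\lesssim \|Iu(t)\|_{H^1}\lesssim N^{1-s}$ and inserting the optimal choice $N\sim T^{1/(2s-3/2)+}$ gives the stated bound
$$
\|u(t)\|_{H^s} + \|n(t)\|_{L^2} + \|\Lambda^{-1}n_t(t)\|_{L^2} \lesssim (1+T)^{\frac{1-s}{2s-\frac{3}{2}}+} \, .
$$
Uniqueness and persistence of regularity come from the local theory of \cite{GTV} in $X^{s,\frac{1}{2}+}$.
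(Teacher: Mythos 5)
You have the right architecture in outline: apply $I$ to the Schr\"odinger part only, use the sharp Gagliardo--Nirenberg inequality together with $\|u_0\|_{L^2}<\|Q\|_{L^2}$ to make the modified energy coercive, prove an almost conservation law on each local interval, iterate, and choose $N\sim T^{1/(2s-\frac32)+}$. But there is a genuine gap exactly at the point you yourself label ``the main obstacle''. You propose to prove the almost conservation law for the plain modified energy $E(Iu,n,v)=H(Iu,n,v)$ and assert that the dyadic analysis ``should yield'' $N^{-(2s-\frac32)+}$. The paper does not proceed this way, and the obstruction is structural: differentiating $H(Iu,n_+)$ along the flow produces the cubic commutator $-2\,\mathrm{Im}\int\bigl(I(nu)-nIu\bigr)\Delta\overline{Iu}\,dx$, carrying two derivatives on $u$ and, in the configuration $|\xi_1|\ll N\ll|\xi_2|\sim|\xi_3|$, no gain in $N$ from the multiplier difference; with $n$ only in $L^2$ ($l=0$) this cannot be beaten down to the decay needed for $s>3/4$. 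The paper's essential device is the \emph{refined} energy $\tilde H(u,n_+)$, in which the multiplier $m_1m_2$ of the cubic term is replaced by $\sigma=\frac{|\xi_1|^2m_1^2-|\xi_2|^2m_2^2}{|\xi_1|^2-|\xi_2|^2}$. This choice makes the contribution of the potential term cancel the kinetic commutator \emph{identically}, so that $\frac{d}{dt}\tilde H$ consists only of a cubic term with symbol $(1-\sigma)|\xi_3|$ (bounded, vanishing at low frequencies, and amenable to the bilinear estimate (\ref{2.9}), yielding the $N^{-\frac12+}\delta^{\frac12-}$ gain) plus a quartic term of the form $u\bar u\,n\,n$ with symbol $\frac{|\xi_2|^2(m_{23}^2-m_2^2)}{|\xi_{23}|^2-|\xi_2|^2}$. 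One then needs the separate fixed-time comparison $|H(Iu,n_+)-\tilde H(u,n_+)|\lesssim N^{-1+}\|Iu\|_{H^1}^2\|n_+\|_{L^2}$ to return to $H$ at the endpoints of each interval. Without this correction term your scheme would at best reach the weaker ranges of earlier work ($s>12/13$ in \cite{FZ}, \cite{ZF}), not $s>3/4$; and in any case the multilinear estimates, which are the entire content of the proof, are not carried out.

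Two further inaccuracies in the bookkeeping. First, the local existence time is not $\delta\sim1$: since $\|Iu_0\|_{H^1}\lesssim N^{1-s}\|u_0\|_{H^s}$, the modified local theory gives $\delta\sim N^{-2(1-s)-}$, so the number of iteration steps is $TN^{2(1-s)+}$ rather than $\sim T$. The threshold $s>3/4$ does not come from a clean per-step decrement $N^{-(2s-\frac32)}$ but from balancing this step count against the several terms $N^{-1+}N^{3(1-s)}$, $N^{-\frac12+}\delta^{\frac12-}N^{3(1-s)}$, $(N^{-2+}+N^{-1+}\delta^{\frac12-})N^{4(1-s)}$ of the increment against the energy scale $N^{2(1-s)}$; the binding constraint is the $N^{-\frac12+}$ term. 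Second, the quartic remainder is not ``quartic in $Iu$'' but contains two wave factors, and its estimate is where the mean-value bounds on $r\mapsto r^2m_N(r)^2$ (hence the restriction $s>\frac12$ in Propositions \ref{prop.3.1} and \ref{prop.3.2}) enter.
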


Global well-posedness for $s=l+1 \ge 3$ and small data is considered
in \cite{AA1}. Using the Fourier restriction norm method for finite
energy solutions ($s=l+1=1$) Bourgain and Colliander \cite{BC}
proved local well-posedness and also global well-posedness in those
cases where the energy functional controls the $H^1 \times L^2
\times \Lambda^{-1} L^2$ - norm of the solution. This is the case,
if $\|u_0\|_{L^2} < \|Q\|_{L^2}$ .

In order to generalize these results to global existence for data
without finite energy one approach in the last years was initiated
by \cite{CKSTT3}, called the I-method. The main idea is to use a
modified energy functional which is also defined for less regular
functions and not strictly conserved. When one is able to control
its growth in time explicitly, this allows to iterate a modified
local existence theorem to continue the solution to any time $T$ and
moreover to estimate its growth in time. This method was
successfully applied by these authors to several equations which
have a scaling invariance. It was used in \cite{CKSTT3} to improve
Bourgain's global well-posedness results \cite{B1},\cite{B2} for the
(2+1)- and (3+1)-dimensional Schr\"odinger equation. Later it was
applied to the (1+1)-dimensional derivative Schr\"odinger equation
\cite{CKSTT4} and to the KdV and modified KdV equation in
\cite{CKSTT1}.

This method was later more refined by adding a suitable correction term to
the modified
energy functional in \cite{CKSTT},\cite{CKSTT1} and \cite{CKSTT2} in order
to damp out
some oscillations in that functional. It was used in \cite{FZ} and \cite{ZF} to 
prove a $L^2$-concentration result for the Zakharov system and as a corollary 
global well-posedness for small initial data could be improved to 
$\frac{12}{13}<s<1$, $l=0$. This method is also used in our
paper in order to further weaken the regularity assumptions on the data.

It is organized as follows. We transform the system in the usual way into a
first order
system. Then we apply the multiplier I to the Schr\"odinger equation only.
Here for a given $N>>1$, we define smoothing
operators $I_N$: \be\label{2.1}
\widehat{I_Nf}(\xi)=m_N(\xi)\hat{f}(\xi), \ee where
\be\label{2.2}m_N(\xi)=\left\{\begin{array}{ll} 1&|\xi|\leqslant N\\
(\frac{N}{|\xi|})^{1-s}&|\xi|\geqslant2N,
\end{array}\right.\ee and $m_N(\xi)$ is smooth, radial,
nonnegative, and nonincreasing in $|\xi|$. We drop N from the notation
for short when there is no confusion.
We remark that $I:H^s \to H^1$ is a
smoothing operator in the following sense:
$$ \|u\|_{X^{m,b}_{\varphi}} \lesssim \|Iu\|_{X^{m+1-s,b}_{\varphi}}
\lesssim N^{1-s}\|u\|_{X^{m,b}_{\varphi}}.
$$
Here we used the $X^{m,b}_{\varphi}$ - spaces which are defined as follows:
for an
equation of the form
$iu_t-\varphi(-i\nabla_x)u = 0$, where $\varphi$ is a measurable
function, let
$X^{m,b}_{\varphi}$ be
the
completion of ${\cal S}({\bf R}\times{\bf R}^2)$ with respect to
\begin{eqnarray*}
\|f\|_{X^{m,b}_{\varphi}} & := &
\|\langle\xi\rangle^m\langle\tau\rangle^b {\cal
F}(e^{it\varphi(-i\nabla_x)}f(x,t))\|_{L^2_{\xi \tau}} \\ & = &
\|\langle\xi\rangle^m\langle\tau+\varphi(\xi)\rangle^b
\widehat{f}(\xi,\tau)\|_{L^2_{\xi \tau}}.
\end{eqnarray*}
For $\varphi(\xi) = \pm | \xi|$ we use the notation $X^{m,b}_{\pm}$ and for
$\varphi(\xi)
= |\xi|^2$
simply $X^{m,b}$. For a given time interval $I$ we define
$\|f\|_{X^{m,b}(I)} =
\inf_{\tilde{f}_{|I} = f}
\|\tilde{f}\|_{X^{m,b}} $ and similarly $\|f\|_{X^{m,b}_{\pm}(I)} $.

For the modified Zakharov system, where only the Schr\"odinger equation is
multiplied by I, we then prove a local existence
theorem by using the
precise estimates given by \cite{GTV} for the standard Zakharov system in
connection with
an interpolation
type lemma in \cite{CKSTT5}. Our aim is to extract a factor
$\delta^{\kappa}$ with maximal
$\kappa$ from the
nonlinear estimates in order to give an optimal lower bound for the local
existence time
$\delta$ in terms of
the norms of the data.

As it is typical for the $I$-method, one then has to consider in
detail the modified energy functional $H(Iu,n)$ and to control its
growth in time in dependence of the time interval and the parameter
$N$ (cf. the definition of $I$ above). The increment of the energy
has to be small for small time intervals and large $N$. The
increment of $H(Iu,n)$ is not controlled directly but one replaces
$H$ by adding a correction term to it leading to a functional $\tilde{H}$, such
that the difference $H-\tilde{H}$ at a fixed time is small for large
$N$, and, moreover, which is the main technical difficulty, the
growth in time of $\tilde{H}$ can be seen to be small for small time
intervals and large $N$, so that one can control also the growth of
the corresponding norms of the solutions during its time evolution.
This allows to iterate the local existence theorem with time steps
of equal length in order to reach any given fixed time $T$. To
achieve this one has to make the process uniform, which can be
done if $s$ is close enough to 1, namely $ s > 3/4 $.

We use the following notation:
$A\lesssim B$ means there is a universal constant $c>0$, such that
$A\leqslant cB$, and $A\sim B$ when both $A\lesssim B$ and
$B\lesssim A$. $<\xi>=(1+|\xi|^2)^\frac{1}{2}$. $c+$ means $c+\epsilon$, while
$c-$ means $c-\epsilon$, where
$\epsilon>0$ small enough.

There are several properties of the norms $X^{m,b}_{\varphi}$ and
$X^{m,b}_{\varphi}[0,T]$, for which we refer to \cite{GTV}
and \cite{B3}:
\begin{prop}\label{prop2.1}
\begin{enumerate}
\item If $u$ is a
solution of $iu_t+\varphi(-i\nabla_x)u = 0$ with $u(0)=f$ and $\psi$ is a
cutoff
function in
$C^{\infty}_0({\bf R})$ with $supp \, \psi \subset (-2,2)$ , $\psi \equiv
1$ on $[-1,1]$ ,
$ \psi(t) =
\psi(-t) $ , $ \psi(t)\ge 0 $ , $\psi_{\delta}(t):=\psi(\frac{t}{\delta})
\, , $  $
0<\delta \le 1$, we
have
for $b>0$:
$$\|\psi_1 u\|_{X^{m,b}_{\varphi}} \lesssim \|f\|_{H^m}.$$
\item
If $v$ is a solution of the problem $iv_t +\varphi(-i\nabla_x)v =
F $, $ v(0)=0 $, we have for $b'+1 \ge b \ge 0 \ge b' > -1/2$
$$ \|\psi_{\delta} v \|_{X^{m,b}_{\varphi}} \lesssim \delta^{1+b'-b}
\|F\|_{X^{m,b'}_{\varphi}}.$$
\item $u\in
X^{s,b}_\varphi(\mathbb{R}\times\mathbb{R}^2)\Longleftrightarrow
e^{-it\varphi(-i\nabla)}u(t,\cdot)\in H^b(\mathbb{R},
H^s(\mathbb{R}^2))$.

\item For $\frac{2}{q}=1-\frac{2}{r}$ , $2 \le r < \infty$ , the following
Strichartz estimate
holds:
\be\label{2.7}\|u\|_{L^q_tL^r_x}\lesssim\|u\|_{X^{0,\frac{1}{2}+}}.\ee
For the wave part we only use
\be\label{2.8}\|u\|_{L^{\infty}_tL^2_x}\lesssim
\|u\|_{X^{0,\frac{1}{2}+}_ \pm}.\ee

\item For $b>\frac{1}{2}$,
$X_\varphi^{s,b}(\mathbb{R}\times\mathbb{R}^2)\hookrightarrow
C(\mathbb{R}, H^s(\mathbb{R}^2))$, and
$X^{s,b}_{\varphi}[0,T](\mathbb{R}^2)\hookrightarrow
C((-T,T),H^s(\mathbb{R}^2))$.

\item (cf. \cite{P}).  For $0\leqslant b'<b<\frac{1}{2}$, or $0\ge b
> b' > - 1/2$ ,
 $0<T<1$,
\be
\label{Em}
\|u\|_{X^{s,b'}_{\varphi}[0,T]}\lesssim
T^{b-b'}\|u\|_{X^{s,b}_{\varphi}[0,T]} \, .
\ee

\item For $s_1\leqslant s_2$, and $b_1\leqslant b_2$,
$X_\varphi^{s_2,b_2}(\mathbb{R}\times\mathbb{R}^2)\hookrightarrow
X_\varphi^{s_1,b_1}(\mathbb{R}\times\mathbb{R}^2)$.

\item If $f,g\in X^{0,\frac{1}{2}+}$, with
\[{1}_{|\xi_1|\sim N_1}\hat{f}=\hat{f}, {1}_{|\xi_2| \sim
N_2}\hat{g}=\hat{g},\]
 and $N_1\gtrsim N_2$, then
\be\label{2.9}\|fg\|_{L^2_{t,x}}\leqslant
C(\frac{N_2}{N_1})^\frac{1}{2}\|f\|_{X^{0,\frac{1}{2}+}}\|g\|_{X^{0,\frac{1
}{2}+}}.\ee
\end{enumerate}
\end{prop}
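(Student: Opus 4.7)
The eight statements collect standard properties of Bourgain spaces and their time-localized versions, all of which appear in \cite{GTV}, \cite{B3}, or \cite{P}. My plan is to dispose of the definitional and embedding items immediately, then treat the inhomogeneous linear estimate (2), the Strichartz estimate in (4), the time-restriction (6), and the bilinear estimate (8), the last being the only genuine piece of work.

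Parts (3) and (7) are essentially definitional. By Plancherel in $t$, one has $\|u\|_{X^{m,b}_\varphi}=\|e^{it\varphi(-i\nabla)}u\|_{H^b_t H^m_x}$ (with the sign convention of the paper), which gives the isomorphism in (3); monotonicity of the weights $\langle\xi\rangle^m\langle\tau+\varphi(\xi)\rangle^b$ in $m$ and $b$ then yields (7). Part (5) is (3) combined with the 1D Sobolev embedding $H^b(\mathbb{R})\hookrightarrow C_b(\mathbb{R})$ for $b>1/2$, the closed-interval version following by extension and the restriction-norm definition. For (1), since $u$ is a free solution, $e^{it\varphi(-i\nabla)}u(t)\equiv f$ is independent of $t$, so (3) gives $\|\psi_1 u\|_{X^{m,b}_\varphi}=\|\psi_1\|_{H^b_t}\|f\|_{H^m_x}$, and $\|\psi_1\|_{H^b_t}$ is a universal constant because $\psi\in C_0^\infty$. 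The wave estimate (\ref{2.8}) is also immediate from (3).

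For the inhomogeneous estimate (2), I would use Duhamel to write $v$, expand $F$ as a Fourier integral in $s$, compute the inner $s$-integral exactly, and split the $\tau$-integration into the resonant region $|\tau+\varphi(\xi)|\lesssim\delta^{-1}$ and the non-resonant region $|\tau+\varphi(\xi)|\gg\delta^{-1}$; the standard bookkeeping (see \cite{GTV}) then produces the factor $\delta^{1+b'-b}$. For the Strichartz estimate (\ref{2.7}), the classical free-evolution bound $\|e^{it\Delta}g\|_{L^q_tL^r_x}\lesssim\|g\|_{L^2}$ at admissible pairs $2/q=1-2/r$ transfers to $X^{0,\frac12+}$ by the usual trick: represent $u$ as a superposition of free solutions parametrized by $\tau$, apply Minkowski, invoke the free estimate on each $\tau$-slice, and conclude by Cauchy--Schwarz against the weight $\langle\tau\rangle^{-(1/2+)}$. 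For (6), applying (3) reduces the claim to the 1D time-restriction inequality $\|\chi_{[0,T]}g\|_{H^{b'}}\lesssim T^{b-b'}\|g\|_{H^b}$, proved in \cite{P}.

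The most substantive item is (8). After Plancherel and writing $\widehat{fg}=\hat{f}*\hat{g}$ in space-time, the $L^2_{t,x}$ norm is bounded by Cauchy--Schwarz in the convolution variables. The algebraic input is the Schr\"odinger resonance identity
\[
(\tau_1+|\xi_1|^2)+(\tau_2+|\xi_2|^2)-(\tau_1+\tau_2)=|\xi_1|^2+|\xi_2|^2,
\]
which forces at least one of the two $X^{0,\frac12+}$-weights to absorb a power $\sim N_1^2$. Combined with the fact that, once $\xi$ is fixed, the set $\{\xi_1:|\xi_1|\sim N_1,\,|\xi-\xi_1|\sim N_2\}$ has two-dimensional measure $\lesssim N_2^2$ when $N_1\gtrsim N_2$, the Cauchy--Schwarz bound produces the required gain $(N_2/N_1)^{1/2}$. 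This is the bilinear $L^2$-Schr\"odinger estimate of Bourgain \cite{B3}. I expect the case analysis of resonance regions in (8) to be the only real obstacle; all other parts amount to essentially routine bookkeeping in the definition of $X^{m,b}_\varphi$.
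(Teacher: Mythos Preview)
The paper does not prove Proposition~\ref{prop2.1} at all: it simply states the eight items and refers the reader to \cite{GTV}, \cite{B3}, and \cite{P}. Your proposal therefore supplies strictly more than the paper does, and your sketches for items (1)--(7) are correct and coincide with the standard arguments in those references.

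Your outline of (8), however, is garbled. The identity you display,
\[
(\tau_1+|\xi_1|^2)+(\tau_2+|\xi_2|^2)-(\tau_1+\tau_2)=|\xi_1|^2+|\xi_2|^2,
\]
is a tautology that constrains nothing, because the output $fg$ is measured in $L^2_{t,x}$ and carries no modulation weight; there is no ``third'' weight to play off against the two input weights. Likewise, the purely spatial volume bound ``measure $\lesssim N_2^2$'' that you invoke does not by itself produce the factor $(N_2/N_1)^{1/2}$. The actual mechanism in \cite{B3} is: transfer via (3) to free solutions, and then, for fixed output $(\tau,\xi)$, observe that the constraint $\tau=-|\xi_1|^2-|\xi-\xi_1|^2$ together with $|\xi-\xi_1|\sim N_2$ confines $\xi_1$ to an arc of length $\lesssim N_2$ on a circle; the Jacobian of the change of variables $\xi_1\mapsto(\tau,\xi)$ contributes a factor $\sim N_1$, and Cauchy--Schwarz along that arc yields $(N_2/N_1)^{1/2}$. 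Since the paper itself only cites \cite{B3} for this estimate, your citation is adequate for the purposes of the paper, but the heuristic you wrote would not lead to a proof.
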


Finally, we use the sharp Gagliardo-Nirenberg embedding for $\mathbb{R}^2$,
which could be
found in \cite{W}:
\be \label{GN}
\frac{1}{2} \|u\|_{L^4}^4 \le \frac{\|u\|_{L^2}^2}{\|Q\|_{L^2}^2} \|\nabla
u\|_{L^2}^2 \ee
for $ u \in H^1$ , where $Q$ denotes the ground state for the Schr\"odinger
equation, i.e.
, the unique positive solution (up to translations) of $ \Delta Q - Q +
|Q|^2Q = 0 $.


\section{Local existence}
The system (\ref{1.2}) has the following conserved quantities:
\be
\label{MC} \|u(t)\|_{L^2}  = \|u_0\|_{L^2}, \ee
and
\be
\label{EC} H(u,n,v):= \|\nabla u\|_{L^2}^2 + 1/2(\|n\|_{L^2}^2 + \|v\|_{L^2}^2)
+
\int n|u|^2 \, dx.\ee
This implies an a priori bound for $ \|\nabla
u\|_{L^2} + \|n\|_{L^2} + \|v\|_{L^2}$ by use of the
Gagliardo-Nirenberg embedding (\ref{GN}) under the assumption
$\|u_0\|_{L^2} < \|Q\|_{L^2} $ as follows:
\begin{eqnarray*}
\int n|u|^2 \, dx \le \|n\|_{L^2} \|u\|_{L^4}^2 & \le & \|n\|_{L^2}
\sqrt{2}
\frac{\|u\|_{L^2}}{\|Q\|_{L^2}} \|\nabla u\|_{L^2} \\
& \le & \frac{\epsilon}{2} \|n\|_{L^2}^2 +
\frac{1}{\epsilon}\frac{\|u\|_{L^2}^2}{\|Q\|_{L^2}^2} \|\nabla u\|_{L^2}^2
\, .
\end{eqnarray*}
Choosing $ 1 > \epsilon > \frac{\|u_0\|_{L^2}^2}{\|Q\|_{L^2}^2} $,
we get
\begin{equation}
\label{2.1'} H(u,n,v) \le 2\|\nabla u\|_{L^2}^2 + \|n\|_{L^2}^2 +
\frac{1}{2} \|v\|_{L^2}^2,
\end{equation}
as well as \be (\frac{1}{2}-\frac{\epsilon}{2})\|n\|_{L^2}^2 +
\frac{1}{2} \|v\|_{L^2}^2 + (1-\frac{1}{\epsilon}
\frac{\|u_0\|_{L^2}^2}{\|Q\|_{L^2}^2}) \|\nabla u\|_{L^2}^2 \le
H(u,n,v), \ee so that \be \label{2.3'} \|\nabla u\|_{L^2}^2 +
\|n\|_{L^2}^2 + \|v\|_{L^2}^2 \le c_0 H(u,n,v). \ee
The system
(\ref{1.1}) is transformed into a first order system in $t$ as
follows: with $ n_{\pm}:= n \pm i\Lambda^{-1}n_t $ , i.e. $
n=\frac{1}{2}(n_++n_-)$, $2i\Lambda^{-1}n_t=n_+-n_-$, and
$\overline{n}_+ = n_- $ we get
\begin{eqnarray}
\label{1'}
i u_t + \Delta u & = & \frac{1}{2}(n_+ + n_-)u, \\
\label{2'}
i n_{\pm t} \mp \Lambda n_{\pm} & = & \pm \Lambda(|u|^2), \\
\label{3'} u(0) = u_0 \quad , \quad n_{\pm}(0) & = & n_{\pm 0} \quad
:= \quad n_0 \pm i\Lambda^{-1}n_1.
\end{eqnarray}
One easily checks that the energy $H(u,n,v)$ is transformed into
$$ H(u,n_+) = \|\nabla u\|_{L^2}^2 + \frac{1}{2} \|n_+\|_{L^2}^2 + \frac{1}{2}
\int (n_+ + \overline{n}_+)|u|^2 dx \, , $$
so that (cf. (\ref{2.1'}))
\be
\label{2.1''} H(u,n_+) \lesssim \|\nabla u\|_{L^2}^2 + \|n_+\|_{L^2}^2 \ee
and (cf. (\ref{2.3'}))
\be
\label{2.3''} \|\nabla u\|_{L^2}^2 + \|n_+\|_{L^2}^2 \le c_0 H(u,n_+) \, . \ee

Now, we will apply the I-method (we refer to the introduction for the
definition of I). A crucial role is played by the modified energy
$H(Iu,n_+)$ for the system
\begin{eqnarray}
\label{1''}
i Iu_t + \Delta Iu & = & \frac{1}{2} I[(n_+ +n_-)u], \\
\label{2''}
i n_{\pm t} \mp \Lambda n_{\pm} & = & \pm \Lambda (|u|^2), \\
\label{3''} Iu(0) = Iu_0 \, , \, n_{\pm} (0) & = & n_{\pm 0} = (n_0
\pm i\Lambda^{-1}n_1),
\end{eqnarray}
namely
$$ H(Iu,n_+) := \|\nabla Iu\|_{L^2}^2 + \frac{1}{2} \|n_+\|_{L^2}^2 +
\frac{1}{2} \int (n_+ + \overline{n}_+)|Iu|^2 \, dx. $$ In order to
give a modified local existence result for the system (\ref{1.1}) we
use the following estimates for the nonlinearities by
Ginibre-Tsutsumi-Velo (cf. \cite{GTV}, Lemma 3.4 and 3.5). We denote here and
in the following $X^{s,b}[0,\delta]$ simply by $X^{s,b}$.
\begin{lem}
\label{LemmaGTV}
\begin{itemize}
\item Assume $ 1 > s \ge 0 $ . Then the following estimate holds :
$$ \|n_{\pm} u \|_{X^{s,-\frac{1}{2}+}} \lesssim \delta^{\frac{1}{2}-}
\|n_{\pm}\|_{X^{0,\frac{1}{2}+}_{\pm}} \|u\|_{X^{s,\frac{1}{2}+}} \, . $$
\item Assume $ s \ge 1/2 $ . Then the following estimate holds:
$$ \|\Lambda(|u|^2)\|_{X^{0,-\frac{1}{2}+}_{\pm}} \lesssim
\delta^{\frac{1}{2}-} \|u\|^2_{X^{s,\frac{1}{2}+}} \, . $$
\end{itemize}
\end{lem}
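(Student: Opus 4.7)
\emph{Plan.} Both inequalities are bilinear estimates between Schrödinger and wave $X^{s,b}$-spaces, of exactly the type treated in \cite{GTV}. The plan is to (i) peel off the $\delta^{1/2-}$ factor by the time-localization embedding of Proposition~\ref{prop2.1}(6), and (ii) prove the resulting $\delta$-free estimates by Plancherel/duality, dyadic decomposition in the spatial frequencies, and the two main tools available: the $L^4_{t,x}$ Strichartz bound (\ref{2.7}) and the bilinear estimate (\ref{2.9}).

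For step (i), applying (\ref{Em}) with $b=0-$, $b'=-\tfrac12+$ yields
$\|F\|_{X^{s,-\frac12+}[0,\delta]}\lesssim \delta^{\frac12-}\|F\|_{X^{s,0-}[0,\delta]}$, and similarly for the $\pm$ norms, so it is enough to establish
\[
\|n_\pm u\|_{X^{s,0-}}\lesssim\|n_\pm\|_{X^{0,\frac12+}_\pm}\|u\|_{X^{s,\frac12+}},
\qquad
\|\Lambda(|u|^2)\|_{X^{0,0-}_\pm}\lesssim\|u\|_{X^{s,\frac12+}}^2,
\]
globally in time, without any $\delta$-gain.

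For step (ii), I would dualize each left-hand side by pairing against a test function $w$ in $X^{-s,0+}$ (resp.\ $X^{0,0+}_\pm$) and decompose $u$, $n_\pm$, $w$ into dyadic Littlewood--Paley pieces of spatial frequencies $N_u,N_n,N_w$. Two regimes arise. In the \emph{high$\times$low} regime, where one factor has frequency much smaller than the other, the improved bilinear inequality (\ref{2.9}) pairs the low-frequency factor with the output and gains the crucial ratio $(N_{\min}/N_{\max})^{1/2}$, which together with the $\langle\xi\rangle^{-s}$ weight on $w$ makes the dyadic sums converge for $s>0$ in the first estimate and for $s\ge\tfrac12$ in the second (where the extra $|\xi|$ coming from $\Lambda$ must be absorbed). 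In the \emph{high$\times$high} regime, where $N_u\sim N_n$, I would use Strichartz: $u,n_\pm\in L^4_{t,x}$ via (\ref{2.7})/(\ref{2.8}) with Hölder pairing against $w\in L^2_{t,x}$. Here the modulation weights $\langle\tau+|\xi|^2\rangle$ and $\langle\tau\mp|\xi|\rangle$, controlled by the $X^{s,\frac12+}$ norms, compensate for the missing frequency gain through the algebraic identity
\[
(\tau_1+|\xi_1|^2)+(\tau_2\mp|\xi_2|)-\tau \;=\; |\xi_1|^2\mp|\xi_2|,
\]
which forces at least one modulation to be large in the resonant set.

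\emph{Main obstacle.} The delicate point is precisely the high$\times$high interaction in the second estimate, where the derivative $\Lambda$ places a full power of $|\xi|$ on the output while both inputs $u$ carry only $\tfrac12$ derivatives at the threshold $s=\tfrac12$. One must run the resonance analysis above carefully so that the extra $|\xi|$ is recovered either from one of the large modulations or from the frequency $N_u\sim N_n$ of the inputs via Cauchy--Schwarz in the dyadic variables. This is exactly where the lower bound $s\ge \tfrac12$ in Lemma~\ref{LemmaGTV} becomes sharp, and it is the calculation carried out in Lemmas~3.4--3.5 of \cite{GTV}, to which we refer.
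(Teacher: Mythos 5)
The paper does not actually prove this lemma: it is quoted verbatim from \cite{GTV} (Lemmas 3.4 and 3.5), and since your last paragraph also ends by deferring the decisive computation to those same lemmas, your ``proof'' ultimately lands where the paper does --- on a citation. That would be acceptable if the reductions you perform before the deferral were harmless, but they are not, and two of them would actively break the argument if carried out as written.

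First, your step (i) is a lossy reduction. Applying (\ref{Em}) with $b=0-$, $b'=-\tfrac12+$ trades the \emph{entire} output weight $\langle\tau+\varphi(\xi)\rangle^{-\frac12+}$ for the factor $\delta^{\frac12-}$, leaving you to prove a global-in-time estimate with the product only in $X^{s,0-}$. But the $-\tfrac12+$ weight on the output modulation is precisely the tool needed in the case where the output modulation dominates the resonance $|\xi|^2-|\xi_1|^2\mp|\xi_2|$; without it the target estimate is at best borderline. Concretely, for the second inequality at $s=\tfrac12$, take $u=u_1+u_2$ with $\hat u_1,\hat u_2$ supported in unit balls around $\pm Ne_1$: then $\Lambda(u_1\bar u_2)$ lives at spatial frequency $\sim N$ and modulation $\sigma\sim N^2$, and one computes $\|\Lambda(u_1\bar u_2)\|_{X^{0,0-}_{\pm}}\sim N^{1-}$ while $\|u\|^2_{X^{1/2,\frac12+}}\sim N$, so there is no room left and no $\delta$-gain to extract; with the $-\tfrac12+$ weight retained the same configuration gives $N^{0+}$. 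The $\delta$ factor in \cite{GTV} is instead obtained by proving the estimate with the \emph{input} exponents slightly below $\tfrac12$ (equivalently via Proposition \ref{prop2.1}(2)), keeping the full weight on the output. Second, your high$\times$high case invokes ``$n_\pm\in L^4_{t,x}$ via (\ref{2.7})/(\ref{2.8})'', but (\ref{2.8}) gives only $L^\infty_tL^2_x$ for the wave part, and no $L^4_{t,x}$ Strichartz estimate holds for the two-dimensional wave flow at $L^2$ regularity ($q=r=4$ is not wave-admissible in $d=2$); this is exactly why the paper restricts itself to (\ref{2.8}) for $n_\pm$. So the skeleton you set up cannot be completed by the tools you cite; if you wish to keep the lemma self-contained you must run the modulation trichotomy of \cite{GTV} with the output weight intact, and otherwise you should simply cite \cite{GTV} as the paper does.
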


\begin{lem}
\label{Lemma5}
In the case $1 > s \ge 0$  the following estimate holds:
\begin{equation}
\label{*1}
\|I(n_{\pm}u)\|_{X^{1,-\frac{1}{2}+}} \lesssim N^{0+} \delta^{\frac{1}{2}-}
\|n_{\pm} \|_{X^{0,\frac{1}{2}+}_{\pm}} \|Iu\|_{X^{1,\frac{1}{2}+}} \, .
\end{equation}
\end{lem}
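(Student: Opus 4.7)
The plan is to decompose both factors dyadically in frequency, use the nondecreasing behaviour of $m(\xi)\langle\xi\rangle$ to push the $I$-operator onto the high-frequency factor, and then reduce to the plain GTV estimate of Lemma~\ref{LemmaGTV}, tracking carefully the cost of moving $I$ through the product.

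Write $n_\pm = \sum_{N_1} P_{N_1}n_\pm$ and $u = \sum_{N_2} P_{N_2}u$ as Littlewood--Paley decompositions. The product $P_{N_1}n_\pm\cdot P_{N_2}u$ is frequency-localized in $|\xi|\lesssim N_{\max}:=\max(N_1,N_2)$. Since $\xi\mapsto m(\xi)\langle\xi\rangle$ is nondecreasing in $|\xi|$ (it equals $\langle\xi\rangle$ on $|\xi|\le N$ and $\sim N^{1-s}|\xi|^s$ on $|\xi|\ge 2N$), we have $m(\xi)\langle\xi\rangle \lesssim m(N_{\max})\langle N_{\max}\rangle$ on that support. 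In the low-high regime $N_1\le N_2$, Lemma~\ref{LemmaGTV} with $s=0$ gives
\[
\|I(P_{N_1}n_\pm P_{N_2}u)\|_{X^{1,-\frac{1}{2}+}} \lesssim m(N_2)\langle N_2\rangle\|P_{N_1}n_\pm P_{N_2}u\|_{X^{0,-\frac{1}{2}+}} \lesssim \delta^{\frac{1}{2}-}\|P_{N_1}n_\pm\|_{X^{0,\frac{1}{2}+}_\pm}\|IP_{N_2}u\|_{X^{1,\frac{1}{2}+}},
\]
which sums by Plancherel in $N_1,N_2$ with no loss at all.

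In the high-low regime $N_1 > N_2$ (which includes the high-high case $N_1\sim N_2$), the output frequency is $\sim N_1$. Fix $\epsilon\in(0,1-s)$ and apply Lemma~\ref{LemmaGTV} with $1-\epsilon$ in place of $s$:
\[
\|P_{N_1}n_\pm P_{N_2}u\|_{X^{0,-\frac{1}{2}+}} \lesssim \delta^{\frac{1}{2}-}\langle N_1\rangle^{-(1-\epsilon)}\langle N_2\rangle^{1-\epsilon}\|P_{N_1}n_\pm\|_{X^{0,\frac{1}{2}+}_\pm}\|P_{N_2}u\|_{X^{0,\frac{1}{2}+}}.
\]
Multiplying by the output weight $m(N_1)\langle N_1\rangle$ and regrouping with the input weight $m(N_2)\langle N_2\rangle$ yields
\[
\|I(P_{N_1}n_\pm P_{N_2}u)\|_{X^{1,-\frac{1}{2}+}} \lesssim R(N_1,N_2)\,\delta^{\frac{1}{2}-}\|P_{N_1}n_\pm\|_{X^{0,\frac{1}{2}+}_\pm}\|IP_{N_2}u\|_{X^{1,\frac{1}{2}+}},\qquad R(N_1,N_2):=\frac{m(N_1)\langle N_1\rangle^{\epsilon}}{m(N_2)\langle N_2\rangle^{\epsilon}}.
\]
A routine subcase analysis on whether each of $N_1,N_2$ is below or above the cutoff $N$ then yields the uniform bound $\sum_{N_2\le N_1} R(N_1,N_2)^2 \lesssim N^{2\epsilon}$ for every $N_1$: in the subcase $N_2\le N\le N_1$ one has $R=N^{1-s}N_1^{\epsilon-(1-s)}N_2^{-\epsilon}$, which is decreasing in $N_1$ since $\epsilon<1-s$ and summable in $N_2$; in the subcase $N\le N_2\le N_1$ one has $R=(N_2/N_1)^{1-s-\epsilon}\le 1$; the subcase $N_1\le N$ is elementary. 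Cauchy--Schwarz in $N_2$ followed by Plancherel orthogonality in $N_1$ (the output frequencies being dyadic-disjoint) then gives the high-low contribution with loss $N^{\epsilon}=N^{0+}$, completing the proof.

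The main technical point is the verification that the choice $\epsilon<1-s$ beats every configuration, most notably the critical case $N_2\ll N\ll N_1$, where the $I$-smoothing acts only on $u$ at a frequency below the cutoff while the output lives far above; this is exactly the configuration that prevents taking $\epsilon=0$ and accounts for the unavoidable $N^{0+}$ loss relative to the pre-$I$ estimate.
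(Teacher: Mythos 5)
Your overall strategy --- dyadic decomposition, pushing the weight $m(\xi)\langle\xi\rangle$ onto the dominant frequency, and reducing to Lemma~\ref{LemmaGTV} at a shifted regularity --- is a legitimate alternative to the paper's argument, and your ratio $R(N_1,N_2)$ is precisely the dyadic version of the pointwise bound $\frac{m(\xi_1)\langle\xi_1\rangle}{m(\xi_2)\langle\xi_2\rangle}\lesssim\frac{\langle\xi_1\rangle^s}{\langle\xi_2\rangle^s}$ that the paper verifies; you also correctly locate the source of the $N^{0+}$ loss. However, there is a genuine gap in the high--high interactions $N_1\sim N_2$, which you fold into the ``high-low'' regime. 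Your claim that for $N_1\ge N_2$ ``the output frequency is $\sim N_1$'' is false when $N_1\sim N_2$: two frequencies of size $\sim N_1$ can sum to anything between $0$ and $2N_1$. This breaks two steps. First, the bound $\|P_{N_1}n_\pm P_{N_2}u\|_{X^{0,-1/2+}}\lesssim\delta^{1/2-}\langle N_1\rangle^{-(1-\epsilon)}\langle N_2\rangle^{1-\epsilon}\|P_{N_1}n_\pm\|_{X^{0,1/2+}_\pm}\|P_{N_2}u\|_{X^{0,1/2+}}$ does not follow from Lemma~\ref{LemmaGTV} at regularity $1-\epsilon$, because the $X^{1-\epsilon,-1/2+}$ norm of the product only dominates $\langle\xi_{\mathrm{out}}\rangle^{1-\epsilon}$ times the $X^{0,-1/2+}$ density, and $\langle\xi_{\mathrm{out}}\rangle$ can be $O(1)$ in this case. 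Second, the concluding ``Plancherel orthogonality in $N_1$'' fails there: for $N_1\sim N_2$ the outputs of distinct dyadic blocks all pile up at low frequencies and are not dyadically disjoint. The diagonal must be treated separately (for instance via Lemma~\ref{LemmaGTV} at regularity $0$, the monotonicity bound $m(\xi_{\mathrm{out}})\langle\xi_{\mathrm{out}}\rangle\lesssim m(N_2)\langle N_2\rangle$, the triangle inequality, and Cauchy--Schwarz over $N_1\sim N_2$); this costs nothing extra but is a missing piece, not a routine detail. A milder instance of the same issue occurs in your low--high regime: for fixed $N_2$ the terms with $N_1\le N_2$ all output at frequencies $\lesssim N_2$, so there is no orthogonality in $N_1$, and $\sum_{N_1}\|P_{N_1}n_\pm\|_{X^{0,1/2+}_\pm}$ is an $\ell^1$ dyadic sum not controlled by $\|n_\pm\|_{X^{0,1/2+}_\pm}$; you should regroup $\sum_{N_1\le N_2}P_{N_1}n_\pm = P_{\le N_2}n_\pm$ before invoking the bilinear estimate.

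For comparison, the paper sidesteps all summation and orthogonality questions: it splits only $u$ at the single threshold $N$, handles the low-frequency piece with $\|Iw\|_{X^{1,b}}\lesssim N^{0+}\|w\|_{X^{1-,b}}$ followed by Lemma~\ref{LemmaGTV} at regularity $1-$, and handles the high-frequency piece by inserting the pointwise symbol comparison directly into the integral form (\ref{*3}) of the GTV estimate, so no dyadic resummation is ever required. If you repair the diagonal and the low--high regrouping as indicated, your proof goes through; as written, the high--high case is a hole.
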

\begin{proof} Let $\chi$ be a smooth cutoff function which equals to 1 for
$|\xi| \le N$ , and equals to 0 for $|\xi| \ge 2N$. We estimate as
follows:
$$ \|I(n_{\pm}u)\|_{X^{1,-\frac{1}{2}+}} \le \|I(n_{\pm} {\cal F}^{-1}(\chi
{\cal F}v))\|_{X^{1,-\frac{1}{2}+}} + \|I(n_{\pm} {\cal
F}^{-1}((1-\chi){\cal F} u)\|_{X^{1,-\frac{1}{2}+}} \, $$
where $ v:=Iu $ .

The first term is estimated by Lemma \ref{LemmaGTV} as follows:
\begin{eqnarray}
\nonumber
\|I(n_{\pm}u)\|_{X^{1,-\frac{1}{2}+}}
&\lesssim & N^{0+} \|n_{\pm} {\cal F}^{-1}(\chi {\cal
F}v)\|_{X^{1-,-\frac{1}{2}+}} \lesssim N^{0+} \delta^{\frac{1}{2}-}
\|n_{\pm}\|_{X^{0,\frac{1}{2}+}_{\pm}} \|{\cal F}^{-1}(\chi {\cal
F}v)\|_{X^{1-,\frac{1}{2}+}} \\
\label{*2}
& \lesssim & N^{0+} \delta^{\frac{1}{2}-}
\|n_{\pm}\|_{X^{0,\frac{1}{2}+}_{\pm}} \|Iu\|_{X^{1,\frac{1}{2}+}}\, .
\end{eqnarray}
Next we consider the second term. By Lemma \ref{LemmaGTV} we have
$$ \|n_{\pm} u_1\|_{X^{s,-\frac{1}{2}+}} \lesssim \delta^{\frac{1}{2}-}
\|n_{\pm}\|_{X^{0,\frac{1}{2}+}_{\pm}} \|u_1\|_{X^{s,\frac{1}{2}+}} \, . $$
This means
\begin{equation}
\label{*3}
\left|\int_{\Sigma_3} \frac{ \hat{f}(\xi_1,\tau_1) \hat{u}_1(\xi_2,\tau_2)
\hat{n}_{\pm}(\xi_3,\tau_3) <\xi_1>^s}{<\sigma_1>^{\frac{1}{2}-}
<\sigma_2>^{\frac{1}{2}+} <\sigma>^{\frac{1}{2}+} <\xi_2>^s} d\xi d\tau
\right| \lesssim \delta^{\frac{1}{2}-} \|f\|_{L^2} \|u_1\|_{L^2}
\|n_{\pm}\|_{L^2} \, ,
\end{equation}
where $f\in L^2$, $\Sigma_3$ denotes the set $\xi_1+\xi_2+\xi_3 = 0$
and $\tau_1+\tau_2+\tau_3 = 0$ , $\sigma_j = \tau_j + |\xi_j|^2$
($j=1,\ 2$) and $\sigma = \tau_3 \pm |\xi_3|$ . In order to prove
$$ \|I(n_{\pm} {\cal F}^{-1}((1-\chi){\cal F} u)\|_{X^{1,-\frac{1}{2}+}}
\lesssim  \delta^{\frac{1}{2}-}
\|n_{\pm}\|_{X^{0,\frac{1}{2}+}_{\pm}} \|I {\cal
F}^{-1}((1-\chi){\cal F} u)\|_{X^{1,\frac{1}{2}+}}, $$ we have to
show with $\hat{u}_1:= (1-\chi)\hat{u}$ :
\begin{equation}
\label{*4}
\left|\int_{\Sigma_3 \cap \{|\xi_2|\ge N \}} \frac{m(\xi_1)
\hat{f}(\xi_1,\tau_1)
\hat{u}_1(\xi_2,\tau_2) \hat{n}_{\pm}(\xi_3,\tau_3)
<\xi_1>}{m(\xi_2)<\sigma_1>^{\frac{1}{2}-} <\sigma_2>^{\frac{1}{2}+}
<\sigma>^{\frac{1}{2}+} <\xi_2>} d\xi d\tau \right| \lesssim
\delta^{\frac{1}{2}-} \|f\|_{L^2} \|u_1\|_{L^2} \|n_{\pm}\|_{L^2} \, .
\end{equation}
Because $|\xi_2| \ge N$, we have:\\
If $|\xi_1| \le N$ : $\frac{m(\xi_1)<\xi_1>}{m(\xi_2)<\xi_2>} \sim
(\frac{|\xi_2|}{N})^{1-s}\frac{<\xi_1>}{<\xi_2>} \lesssim
(\frac{<\xi_1>}{N})^{1-s} \frac{<\xi_1>^s}{<\xi_2>^s} \lesssim
\frac{<\xi_1>^s}{<\xi_2>^s}$ . \\
If $|\xi_1| \ge N$ : $\frac{m(\xi_1)<\xi_1>}{m(\xi_2)<\xi_2>} \sim
(\frac{|\xi_2|}{|\xi_1|})^{1-s}\frac{<\xi_1>}{<\xi_2>}  \lesssim
\frac{<\xi_1>^s}{<\xi_2>^s}$.\\ So (\ref{*3}) implies (\ref{*4}).
Thus
\begin{eqnarray*}
\|I(n_{\pm} {\cal F}^{-1}((1-\chi){\cal F} u)\|_{X^{1,-\frac{1}{2}+}}
& \lesssim & \delta^{\frac{1}{2}-} \|n_{\pm}\|_{X^{0,\frac{1}{2}+}_{\pm}} \|I
{\cal F}^{-1}((1-\chi){\cal F} u)\|_{X^{1,\frac{1}{2}+}} \\
& \lesssim &
\delta^{\frac{1}{2}-} \|n_{\pm}\|_{X^{0,\frac{1}{2}+}_{\pm}}
 \|Iu\|_{X^{1,\frac{1}{2}+}} \, .
\end{eqnarray*}
\end{proof}

\begin{prop}
\label{Prop.LE}
Assume $ 1 > s \ge 1/2 $ and $ (u_0,n_{+0},n_{-0}) \in H^s \times L^2
\times L^2 $. Then
there exists
$$ \delta \sim \frac{1}{(\|Iu_0\|_{H^1} + \|n_{+0}\|_{L^2} +
\|n_{-0}\|_{L^2})^{2+}N^{0+}}, $$ such that the system
(\ref{1''}),(\ref{2''}),(\ref{3''}) has a unique local solution in
the time interval $[0,\delta]$ with the property:
$$ \|Iu\|_{X^{1,\frac{1}{2}+}} + \|n_+\|_{X^{0,\frac{1}{2}+}} +
\|n_-\|_{X^{0,\frac{1}{2}+}} \lesssim \|Iu_0\|_{H^1} + \|n_{+0}\|_{L^2} +
\|n_{-0}\|_{L^2} \, . $$
This immediately implies
$$ \|Iu\|_{C^0([0,\delta],H^1)} + \|n_+\|_{C^0([0,\delta],L^2)} +
\|n_-\|_{C^0([0,\delta],L^2)} \lesssim \|Iu_0\|_{H^1} + \|n_{+0}\|_{L^2} +
\|n_{-0}\|_{L^2} \, . $$
\end{prop}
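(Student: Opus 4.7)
The plan is a standard Picard contraction argument for the Duhamel form of (\ref{1''})--(\ref{3''}) in the Banach space
$$ Y := X^{1,\frac{1}{2}+}[0,\delta] \times X^{0,\frac{1}{2}+}_{+}[0,\delta] \times X^{0,\frac{1}{2}+}_{-}[0,\delta], $$
with norm $\|(v,w_+,w_-)\|_Y := \|v\|_{X^{1,\frac{1}{2}+}} + \|w_+\|_{X^{0,\frac{1}{2}+}_+} + \|w_-\|_{X^{0,\frac{1}{2}+}_-}$. Set $R := \|Iu_0\|_{H^1} + \|n_{+0}\|_{L^2} + \|n_{-0}\|_{L^2}$. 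First I would represent the solution as the sum of free evolutions plus Duhamel integrals, each multiplied by the cutoff $\psi_{\delta}$ so that Proposition \ref{prop2.1}(1)--(2) apply with $b = \frac{1}{2}+$, $b' = -\frac{1}{2}+$. The linear part is then bounded by $R$, and the inhomogeneous part is reduced to bounding the nonlinearities in $X^{1,-1/2+}$ and $X^{0,-1/2+}_{\pm}$ respectively.

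The two nonlinear estimates are now exactly the content of the preceding lemmas. From Lemma \ref{Lemma5},
$$ \|I[(n_++n_-)u]\|_{X^{1,-\frac{1}{2}+}} \lesssim N^{0+}\delta^{\frac{1}{2}-}\bigl(\|n_+\|_{X^{0,\frac{1}{2}+}_+}+\|n_-\|_{X^{0,\frac{1}{2}+}_-}\bigr)\|Iu\|_{X^{1,\frac{1}{2}+}}. $$
For the wave nonlinearity, since $s\ge \tfrac{1}{2}$, Lemma \ref{LemmaGTV} combined with the smoothing bound $\|u\|_{X^{s,\frac{1}{2}+}}\lesssim \|Iu\|_{X^{1,\frac{1}{2}+}}$ (from the definition of $I$) yields
$$ \|\Lambda(|u|^2)\|_{X^{0,-\frac{1}{2}+}_{\pm}} \lesssim \delta^{\frac{1}{2}-}\|Iu\|_{X^{1,\frac{1}{2}+}}^2. $$
Adding these two bounds shows that the nonlinear map $\Phi$ satisfies $\|\Phi(v,w_+,w_-)\|_Y \le CR + CN^{0+}\delta^{\frac{1}{2}-}\|(v,w_+,w_-)\|_Y^2$, so $\Phi$ sends the closed ball $B_{2CR}\subset Y$ into itself provided $C\,\delta^{\frac{1}{2}-}N^{0+}R \le \tfrac{1}{2}$, i.e. for $\delta \sim R^{-(2+)}N^{-(0+)}$, which is exactly the claimed lower bound on the existence time.

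The contraction property follows by applying the same bilinear estimates to the difference of two candidates in $B_{2CR}$, with contraction constant bounded by the same quantity $C\delta^{\frac{1}{2}-}N^{0+}R < 1$, giving a unique fixed point in $Y$ and the first asserted norm bound. The continuity in time (the $C^0$ bound) then follows from the embedding $X^{m,\frac{1}{2}+}[0,\delta] \hookrightarrow C^0([0,\delta],H^m)$ recorded in Proposition \ref{prop2.1}(5). The only delicate step is the Schrödinger estimate, where one must commute $I$ past the product $n_\pm u$ without losing a power of $N$; this is handled by the high/low frequency decomposition in Lemma \ref{Lemma5}, which costs only the harmless factor $N^{0+}$. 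Once that lemma is in hand, the remainder is a textbook Bourgain-space Picard iteration.
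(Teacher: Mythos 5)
Your proof is correct and follows essentially the same route as the paper: a Duhamel/contraction argument in the product Bourgain space $X^{1,\frac{1}{2}+}\times X^{0,\frac{1}{2}+}_{+}\times X^{0,\frac{1}{2}+}_{-}$, with Lemma \ref{Lemma5} supplying the $N^{0+}\delta^{\frac{1}{2}-}$ gain for the Schr\"odinger nonlinearity and the same choice of $\delta$. The only (harmless) deviation is in the wave nonlinearity, where the paper invokes the interpolation lemma of \cite{CKSTT5} to bound $\|I\Lambda(|u|^2)\|_{X^{1-s,-\frac{1}{2}+}_{\pm}}$, while you use the direct $N$-uniform bound $\|u\|_{X^{s,\frac{1}{2}+}}\lesssim\|Iu\|_{X^{1,\frac{1}{2}+}}$ together with Lemma \ref{LemmaGTV}; both yield the same estimate.
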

\begin{proof} We use the corresponding integral equations to define a
mapping $S=(S_0,S_1)$ by
\begin{eqnarray*}
S_0(Iu(t)) & = & Ie^{it\Delta}u_0 + \frac{1}{2} \int_0^t e^{i(t-s)\Delta}
I(u(s)(n_+(s)+n_-(s))ds \\
S_1(n_{\pm}(t)) & = & e^{it\Lambda}n_{\pm 0} \pm i \int_0^t e^{\mp
i(t-s)\Delta} \Lambda
(|u(s)|^2) ds \, .
\end{eqnarray*}
Combining Lemma \ref{LemmaGTV} with the interpolation lemma of
\cite{CKSTT5} we get
$$
\| \Lambda(|u|^2)\|_{X^{0,-\frac{1}{2}+}_{\pm}}  \le \| I
\Lambda(|u|^2)\|_{X^{1-s,-\frac{1}{2}+}_{\pm}} \lesssim
\delta^{\frac{1}{2}-}
\|Iu\|^2_{X^{1,\frac{1}{2}+}} \, .$$
This immediately implies
$$ \|S_1(n_{\pm})\|_{X^{0,\frac{1}{2}+}} \lesssim \|n_{\pm 0}\|_{L^2} +
\delta^{\frac{1}{2}-}
\|Iu\|^2_{X^{1,\frac{1}{2}+}} \, .
$$
Using Lemma \ref{Lemma5} we get \be \label{le}
\|S_0(Iu)\|_{X^{1,\frac{1}{2}+}} \lesssim \|Iu_0\|_{H^1} + N^{0+}
\delta^{\frac{1}{2}-} \|n_{\pm}\|_{X_{\pm}^{0,\frac{1}{2}+}}
\|Iu\|_{X^{1,\frac{1}{2}+}}. \ee Choosing $\delta$ as in the
statement of this proposition the standard contraction argument
gives a unique fixed point of $S$, thus the claimed result.
\end{proof}



\section{Estimates for the modified energy}

In this section, let us get the control of the increment of the
modified energy.

As the modified energy is
\[H(Iu,n_+)(t)=\|\nabla
Iu\|_{L^2}^2+\frac{1}{2}\|n_+\|_{L^2}^2+\frac{1}{2}\int_\mathbb{R}(n_++
\bar{n}_+)|Iu|^2d
x,\]
which is not conserved any more, we have to control its growth.

For functions depending on t we drop t from the notation here and in the
following.

First of all, let us define a new quantity
$\tilde{H}(u,n_+)(t)$, which is a slight variant of $H(Iu,n_+)(t)$,
and establish an almost conservation law for that quantity instead.
\begin{defn}\label{def3.1}
Let $k$ be an integer and $\Sigma_k\subset(\mathbb{R}^2)^k$ denote the
space
\[\Sigma_k:=\{(\xi_1,\cdots,\xi_k)\in(\mathbb{R}^2)^k:\xi_1+\cdots+\xi_k=0\},
\]
then
\[\tilde{H}(u,n_+)(t)=-\int_{\Sigma_2}\xi_1m_1\cdot\xi_2m_2\hat{u}(\xi_1)
\hat{\bar{u}}(\xi_2
)+
\frac{1}{2}\int_{\Sigma_2}\hat{n}_+(\xi_1)\hat{\bar{n}}_+
(\xi_2)+\frac{1}{2}\int_{\Sigma
_3}
\sigma\hat{u}(\xi_1)\hat{\bar{u}}(\xi_2)(\hat{n}_++\hat{\bar{n}}_+)(\xi_3)
\]
is called the refined energy, where $m_i=m_N(\xi_i)$, and
$\sigma=\frac{|\xi_1|^2m_1^2-|\xi_2|^2m_2^2}{|\xi_1|^2-|\xi_2|^2}$.
\end{defn}

Then we shall show the following:
\begin{prop}\label{prop.3.1}(Fixed-time difference)
For $s>\frac{1}{2}$ we have
\be\label{3.1}|H(Iu,n_+)(t)-\tilde{H}(u,n_+)(t)|\lesssim
N^{-1+}\|Iu(t)\|_{H^1_x(\mathbb{R}^2)}^2\|n_+(t)\|_{L^2_x(\mathbb{R}^2
)}.\ee
\end{prop}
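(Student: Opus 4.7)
\textbf{Proof plan for Proposition \ref{prop.3.1}.}

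First I would rewrite both energies on the Fourier side using Plancherel. Since $\widehat{Iu}(\xi)=m(\xi)\hat u(\xi)$, the first two terms of $H(Iu,n_+)$ are
\[
\|\nabla Iu\|_{L^2}^2 = -\int_{\Sigma_2}\xi_1 m_1\!\cdot\!\xi_2 m_2\,\hat u(\xi_1)\hat{\bar u}(\xi_2),\qquad
\tfrac12\|n_+\|_{L^2}^2=\tfrac12\int_{\Sigma_2}\hat n_+(\xi_1)\hat{\bar n}_+(\xi_2),
\]
which match the corresponding terms of $\tilde H$ exactly. The whole discrepancy therefore comes from the cubic piece, and a direct computation gives
\[
H(Iu,n_+)-\tilde H(u,n_+)=\tfrac12\int_{\Sigma_3}\bigl(m_1m_2-\sigma\bigr)\,\hat u(\xi_1)\hat{\bar u}(\xi_2)\bigl(\hat n_+ +\hat{\bar n}_+\bigr)(\xi_3)\,d\xi.
\]
Using $\xi_1+\xi_2+\xi_3=0$ and expanding $\sigma$, I would simplify the symbol to
\[
m_1m_2-\sigma \;=\; \frac{(m_2-m_1)\bigl(|\xi_1|^2 m_1+|\xi_2|^2 m_2\bigr)}{|\xi_1|^2-|\xi_2|^2},
\]
so the problem reduces to a pointwise bound on this multiplier times a trilinear $L^2\times L^2\times L^2$ estimate.

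Next I would do the multiplier estimate by case analysis on the dyadic sizes $|\xi_1|\sim N_1$, $|\xi_2|\sim N_2$. When $N_1,N_2\lesssim N$ both $m_i=1$ and $\sigma=1$, so the symbol vanishes and there is nothing to prove. In the remaining cases at least one frequency is $\gtrsim N$. When $N_1\sim N_2\gg N$, I would use the mean value theorem for the smooth, radial $m$: $|m(\xi_1)-m(\xi_2)|\lesssim\|\nabla m\|_{L^\infty(\{|\xi|\sim N_1\})}|\xi_1-\xi_2|\lesssim (1-s)\,m_1|\xi_1-\xi_2|/N_1$, combined with the factorization $|\xi_1|^2-|\xi_2|^2=(\xi_1-\xi_2)\cdot(\xi_1+\xi_2)=-(\xi_1-\xi_2)\cdot\xi_3$, so
\[
|m_1m_2-\sigma|\;\lesssim\;\frac{m_1m_2\,|\xi_1|^2}{N_1\,|\xi_1+\xi_2|\cos\theta},
\]
where the geometric factor is absorbed after dualizing and integrating (the standard trick here is to observe that the bad set where $|\xi_1+\xi_2|$ is very small relative to $|\xi_1|$ is controlled because it forces $|\xi_3|$ small, where $\hat n_+$ is summable). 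In the asymmetric case (say $N_1\gg N_2$) one has $|\xi_1|^2-|\xi_2|^2\sim|\xi_1|^2$ and the symbol is bounded crudely by $m_1m_2+m_1^2\lesssim m_1 m_2$, with an extra $N^{-1+}$ harvested from $m_1=(N/N_1)^{1-s}\le (N/N_1)^{1-}$. In all the non-trivial cases the bound I obtain is
\[
\bigl|m_1m_2-\sigma\bigr|\;\lesssim\;N^{-1+}\,m_1\langle\xi_1\rangle\,m_2\langle\xi_2\rangle.
\]

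Finally, plugging this in and changing variables via $\widehat{Iu}=m\hat u$, I would estimate
\[
|H-\tilde H|\lesssim N^{-1+}\!\!\int_{\Sigma_3}\!\langle\xi_1\rangle|\widehat{Iu}(\xi_1)|\,\langle\xi_2\rangle|\widehat{Iu}(\xi_2)|\,|\hat n_+(\xi_3)|\,d\xi
\]
and bound the trilinear integral by Parseval plus Hölder, writing the integral as $\int F\,G\,\overline{n_+}\,dx$ with $\widehat F=\langle\xi\rangle|\widehat{Iu}|$, $\widehat G=\langle\xi\rangle|\widehat{Iu}|$, using $\|F\|_{L^4}\|G\|_{L^4}\|n_+\|_{L^2}\lesssim\|Iu\|_{H^1}^2\|n_+\|_{L^2}$ via the 2D Sobolev embedding $H^{1/2+}\hookrightarrow L^4$ (the $\epsilon$ loss in the exponent of $N$ comes exactly from replacing $H^1$ by $H^{1/2+}$ on one factor). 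This yields the claimed bound (\ref{3.1}).

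The main obstacle is the multiplier bound in the symmetric high-high regime $N_1\sim N_2\gg N$: the naive estimate using $|\xi_1-\xi_2|$ in the numerator together with $|\xi_1|^2-|\xi_2|^2$ in the denominator loses the required $N^{-1+}$ gain unless one is careful to exploit the cancellation $|\xi_1|^2-|\xi_2|^2=(\xi_1-\xi_2)\cdot(\xi_1+\xi_2)$ together with a Littlewood-Paley decomposition in $\xi_3$. Every other case is comparatively routine.
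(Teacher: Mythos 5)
Your reduction to the cubic term, the algebraic factorization of $m_1m_2-\sigma$, and the observation that the symbol vanishes unless $\max(N_1,N_2)\gtrsim N$ are all correct, and the overall shape (dyadic decomposition, pointwise symbol bound, trilinear H\"older) matches the paper's. However, two steps do not close as written. In the symmetric regime $N_1\sim N_2\gtrsim N$ you apply the mean value theorem to the vector variable, getting $|m_1-m_2|\lesssim m_1|\xi_1-\xi_2|/N_1$, and divide by $|\xi_1|^2-|\xi_2|^2=(\xi_1-\xi_2)\cdot(\xi_1+\xi_2)$; this leaves an uncancelled $1/\cos\theta$, a genuine singularity since the denominator vanishes on the sphere $|\xi_1|=|\xi_2|$ while $|\xi_1-\xi_2|$ does not. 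Your proposed rescue --- that small $|\xi_1+\xi_2|$ forces $|\xi_3|$ small ``where $\hat{n}_+$ is summable'' --- does not work: $\hat{n}_+$ is only in $L^2$ and nothing is small there. The singularity is an artifact of ignoring that $m$ is radial: since $m_1-m_2=m(|\xi_1|)-m(|\xi_2|)$, the one-dimensional mean value theorem in $r=|\xi|$ produces exactly the factor $\bigl||\xi_1|-|\xi_2|\bigr|$ that cancels against $|\xi_1|^2-|\xi_2|^2=(|\xi_1|+|\xi_2|)(|\xi_1|-|\xi_2|)$. This is what the paper does in Lemma \ref{lem3.1} (with $f(r)=r^2m(r)^2$, where $s>1/2$ is used to control $f'$), and it yields $|\sigma|\lesssim 1$, hence $|m_1m_2-\sigma|\lesssim 1$, with no angular issue at all.

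The second gap is that your closing trilinear estimate does not balance. After extracting $N^{-1+}$ you are left with $\int_{\Sigma_3}|\widehat{F}(\xi_1)||\widehat{G}(\xi_2)||\hat{n}_+(\xi_3)|$ with $\widehat{F}=\langle\xi\rangle|\widehat{Iu}|$, so $\|F\|_{L^2}=\|Iu\|_{H^1}$ but $\|F\|_{L^4}\lesssim\|F\|_{H^{1/2+}}=\|Iu\|_{H^{3/2+}}$, which is not controlled; an $L^2\times L^2\times L^2$ trilinear bound is false, so something extra must be paid, and you have already spent the entire weight $\langle\xi_1\rangle\langle\xi_2\rangle$ on the multiplier. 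Indeed, in the block $N_1\sim N$, $N_2\sim 1$ your claimed bound $|m_1m_2-\sigma|\lesssim N^{-1+}m_1\langle\xi_1\rangle m_2\langle\xi_2\rangle$ has no margin at all, so no fraction of the derivative on the high-frequency factor can be traded for Sobolev embedding. The correct H\"older split is the asymmetric one used in the paper, $\|Iu_1\|_{L^{2+}_x}\|Iu_2\|_{L^{\infty-}_x}\|n_+\|_{L^2_x}$, which costs only $N_1^{-1+}\|Iu_1\|_{H^1}$ on the high factor and $\|Iu_2\|_{H^1}$ on the other; combined with $\frac{1}{m(N_1)m(N_2)}\lesssim(\frac{N_1}{N})^{1-s}((\frac{N_2}{N})^{1-s}+1)$, $N_1\gtrsim N$ and $s>1/2$, this gives $N^{-1+}N_1^{0-}$ per block, which sums. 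Both gaps are repairable, but as written the symmetric symbol bound and the final H\"older step would fail.
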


\begin{prop}\label{prop.3.2}
(Almost conservation law) For $s>\frac{1}{2}$, if
$(Iu,n_+,n_-)$ is the solution to the Cauchy problem
(\ref{1''}),(\ref{2''}),(\ref{3''})
on the time interval $[0,\delta]$ with initial data $(Iu_0,n_{+0},n_{-0})\in
H^1(\mathbb{R}^2)\times L^2(\mathbb{R}^2)\times L^2(\mathbb{R}^2)$, then we have
 \bea
&&|\tilde{H}(u,n_+)(\delta)-\tilde{H}(u,n_+)(0)|\nonumber\\
&\lesssim&
N^{-\frac{1}{2}+}\delta^{\frac{1}{2}-}\|Iu\|_{X^{1,\frac{1}{2}+}}^2\|n_+\|_
{X^{0,\frac{1}{2}+}_
{+}}+(N^{-2+}+N^{-1+}\delta^{\frac{1}{2}-})\|Iu\|_{X^{1,\frac{1}{2}+}}^2\|n
_+\|_{X^{0
,\frac{1}{2}+}_+}^2.
\label{3.2}\eea
\end{prop}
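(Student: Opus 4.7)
The plan is to compute $\frac{d}{dt}\tilde H(u,n_+)$, substitute the time derivatives using (\ref{1''})--(\ref{2''}), decompose the resulting multilinear expression into a trilinear part (producing the first term on the right of (\ref{3.2})) and a quadrilinear part (producing the second), then integrate over $[0,\delta]$ and apply the multilinear machinery of Section 2.

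\textbf{The cancellation at the trilinear level.} Differentiating the first two terms of $\tilde H$ and using $i\,Iu_t = -\Delta Iu + \tfrac12 I[(n_+{+}n_-)u]$ together with $i\,n_{\pm t} = \pm\Lambda n_\pm \pm \Lambda(|u|^2)$ produces, after integration by parts and Plancherel, a trilinear expression on $\Sigma_3$ with Fourier multiplier essentially $|\xi_1|^2 m_1^2 - |\xi_2|^2 m_2^2 \mp |\xi_3|$ applied to $\hat u(\xi_1)\hat{\bar u}(\xi_2)\hat n_\pm(\xi_3)$, plus a commutator piece measuring the failure of $I$ to commute with multiplication. Differentiating the correction term $\tfrac12\int_{\Sigma_3}\sigma\,\hat u(\xi_1)\hat{\bar u}(\xi_2)(\hat n_+{+}\hat{\bar n}_+)(\xi_3)$ and keeping only the \emph{linear} parts of $\partial_t u$ and $\partial_t n_\pm$ gives a contribution with multiplier proportional to $(|\xi_1|^2 - |\xi_2|^2 \mp |\xi_3|)\,\sigma$. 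The defining formula
\[\sigma \;=\; \frac{|\xi_1|^2 m_1^2 - |\xi_2|^2 m_2^2}{|\xi_1|^2 - |\xi_2|^2}\]
is tuned so that the Schr\"odinger-dispersive contributions cancel exactly, leaving only a multiple of $|\xi_3|\,\sigma$ as trilinear remainder. Since $\sigma-1$ is supported where $\max(|\xi_1|,|\xi_2|)\gtrsim N$, a mean-value argument applied to $|\xi|^2 m(\xi)^2$ in the resonant regime $|\xi_3|\ll|\xi_1|\sim|\xi_2|$ yields the pointwise bound $|\xi_3|\,|\sigma|\lesssim N^{-1+}\langle\xi_1\rangle m_1\langle\xi_2\rangle m_2$, and in the non-resonant regimes the bound is at least as good. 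Integrating in time over $[0,\delta]$ and invoking Lemma \ref{LemmaGTV} combined with the bilinear estimate (\ref{2.9}) (which contributes an additional factor $(|\xi_3|/\max(|\xi_1|,|\xi_2|))^{1/2}$ in the resonant regime) produces the target bound $N^{-1/2+}\delta^{1/2-}\|Iu\|_{X^{1,1/2+}}^2\|n_+\|_{X^{0,1/2+}_+}$.

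\textbf{Quadrilinear piece and main obstacle.} Substituting the nonlinear parts of the time derivatives (namely $\tfrac12 I[(n_+{+}n_-)u]$ into $\partial_t u$, and $\pm\Lambda(|u|^2)$ into $\partial_t n_\pm$) into the correction term produces quadrilinear expressions whose multipliers are bounded by $|\sigma|\lesssim m_1 m_2$ and $|\xi_3||\sigma|$ respectively. No further cancellation is available; instead one applies Lemma \ref{LemmaGTV} and Lemma \ref{Lemma5} directly to obtain $(N^{-2+}+N^{-1+}\delta^{1/2-})\|Iu\|_{X^{1,1/2+}}^2\|n_+\|_{X^{0,1/2+}_+}^2$, where the $N^{-2+}$ piece comes from a contribution not carrying a $\delta^{1/2-}$ factor and the $N^{-1+}\delta^{1/2-}$ piece uses the GTV time gain. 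The main obstacle throughout is the pointwise symbol analysis for the trilinear remainder: one must carefully separate the frequency regimes (resonant $|\xi_1|\sim|\xi_2|\gg|\xi_3|$ versus the others) and exploit the mean-value identity for $|\xi|^2 m(\xi)^2$ in order to extract the $N^{-1/2+}$ gain while preserving enough structure for the bilinear Strichartz estimate to close. Once this is in place, the remaining steps are routine applications of Section 2.
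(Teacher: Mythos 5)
Your overall architecture for the cubic part (differentiate $\tilde H$, use the choice of $\sigma$ to cancel the dispersive trilinear contributions, then estimate the remainder with multiplier $(1-\sigma)|\xi_3|$ by a bounded-symbol estimate combined with the bilinear refinement (\ref{2.9}) and a $\delta^{\frac12-}$ gain from the wave factor) matches the paper, and the $N^{-\frac12+}$ is attainable that way. Note, though, that your intermediate pointwise claim $|\xi_3|\,|1-\sigma|\lesssim N^{-1+}\langle\xi_1\rangle m_1\langle\xi_2\rangle m_2$ fails in the regime $|\xi_2|\ll N\lesssim|\xi_1|\sim|\xi_3|$: there $|1-\sigma|\sim 1$ and only the bilinear gain $(|\xi_2|/|\xi_1|)^{1/2}$ saves the estimate, which is exactly why the final exponent is $-\frac12$ rather than $-1$.

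The quadrilinear part contains a genuine gap. First, the quartic-in-$u$ term you list (from substituting $\pm\Lambda(|u|^2)$ into $\partial_t n_\pm$ inside the correction term) vanishes identically: the correction term carries $n_++\bar n_+=2n$, and $\partial_t(n_++\bar n_+)=-i\Lambda(n_+-\bar n_+)$ because the $\Lambda(|u|^2)$ contributions cancel between $n_+$ and $\bar n_+$; had it not vanished, a term quartic in $u$ could not be bounded by $\|Iu\|_{X^{1,\frac12+}}^2\|n_+\|_{X^{0,\frac12+}_+}^2$ in the first place. Second, and fatally, your assertion that ``no further cancellation is available'' and that one may bound the remaining quadrilinear multiplier by $|\sigma|\lesssim m_1m_2$ cannot close: $m_1m_2=1$ on the region where all frequencies are $\ll N$, so this bound extracts no negative power of $N$ at all, and an $O(1)$ multiplier on $\int_0^\delta\int_{\Sigma_4}\hat u\,\hat{\bar u}\,\hat n\,\hat n$ is useless for the iteration. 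The paper's proof combines the quadrilinear contribution of the kinetic term $\|\nabla Iu\|_{L^2}^2$ with that of the correction term, producing the difference multiplier $\frac{|\xi_2|^2(m_{23}^2-m_2^2)}{|\xi_{23}|^2-|\xi_2|^2}$, which vanishes unless $\max(|\xi_2|,|\xi_3|)\gtrsim N$ and is controlled by $|\xi_2|^2/|\xi_3|^2$ or by $m_2^2$ via mean-value estimates on $r\mapsto r^2m_N(r)^2$ in the various frequency regimes. That second cancellation, beyond the trilinear one, is precisely the source of the $N^{-2+}$ and $N^{-1+}\delta^{\frac12-}$ gains, and your argument omits it.
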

The remaining part of this section is devoted to prove the above two
propositions.\\
{\bf Proof of Proposition \ref{prop.3.1}:}
Since
\[H(Iu,n_+)=-\int_{\Sigma_2}\xi_1m_1\cdot\xi_2m_2\hat{u}(\xi_1)\hat{\bar{u}}
(\xi_2)+\frac{1}{2}\int_{\Sigma_2}\hat{n}_+(\xi_1)\hat{\bar{n}}_+(\xi_2)+
\frac{1}{2}\int_{\Sigma_3}m_1m_2\hat{u}(\xi_1)\hat{\bar{u}}(\xi_2)
(\hat{n}_++\hat{\bar{n}}_+)(\xi_3),\] and \[
\tilde{H}(u,n_+)=-\int_{\Sigma_2}\xi_1m_1\cdot\xi_2m_2\hat{u}(\xi_1)
\hat{\bar{u}}(\xi_2)+\frac{1}{2}\int_{\Sigma_2}\hat{n}_+(\xi_1)
\hat{\bar{n}}_+(\xi_2)+\frac{1}{2}\int_{\Sigma_3}\sigma\hat{u}(\xi_1)
\hat{\bar{u}}(\xi_2)(\hat{n}_++\hat{\bar{n}}_+)(\xi_3),\] we have
\be\label{1}H(Iu,n_+)-\tilde{H}(u,n_+)=\frac{1}{2}\int_{\Sigma_3}(m_1m_2-
\sigma)\hat{u}(\xi_1)\hat{\bar{u}}(\xi_2)(\hat{n}_++\hat{\bar{n}}_+)(\xi_3).
\ee

We use a dyadic decomposition with $N_i\leqslant|\xi_i|\leqslant 2N_i$. As
the complex conjugates will play no role here, we can suppose
$N_1\geqslant N_2$.

If $N_2\leqslant N_1<<N$, then by the
definition of $m_N$ and $\sigma$, the integral vanishes. Hence, we suppose
$N_1\gtrsim N$.

Therefore, it remains to prove under these assumptions
\be\label{2}
I=|\int_{\Sigma_3}(m_1m_2-\sigma)\hat{u}(\xi_1)\hat{\bar{u}}(\xi_2)(\hat{n}
_++\hat{\bar{n}}_+)(\xi_3)|\lesssim
N^{-1+}N_1^{0-}\|Iu\|_{H^1}^2\|n_+\|_{L^2}.\ee

\begin{lem}\label{lem3.1}
Under the above assumption, $\sigma$ is bounded.
\end{lem}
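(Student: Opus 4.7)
The plan is to recognize $\sigma$ as a one-variable divided difference and reduce the claim to the uniform boundedness of a scalar derivative.

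Set $a := |\xi_1|$ and $b := |\xi_2|$, and introduce the function $G(y) := y\, m_N(\sqrt{y})^2$ for $y \geq 0$. Then
$$ \sigma = \frac{a^2 m_N(a)^2 - b^2 m_N(b)^2}{a^2 - b^2} = \frac{G(a^2) - G(b^2)}{a^2 - b^2}, $$
which by the mean value theorem equals $G'(\eta)$ for some $\eta$ between $a^2$ and $b^2$ (extended continuously by $G'(a^2)$ when $a = b$). Consequently, it suffices to prove that $G'$ is uniformly bounded on $[0,\infty)$, with a bound depending only on $s$.

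A direct differentiation yields $G'(y) = m_N(\sqrt{y})^2 + \sqrt{y}\, m_N(\sqrt{y})\, m_N'(\sqrt{y})$, and the definition of $m_N$ partitions $[0,\infty)$ into three regimes. On $\sqrt{y} \leq N$ one has $m_N \equiv 1$ and $m_N' \equiv 0$, so $G'(y) = 1$. On $\sqrt{y} \geq 2N$ the explicit formula $m_N(r) = (N/r)^{1-s}$ gives $r\, m_N'(r) = -(1-s)\, m_N(r)$, hence $G'(y) = s\, m_N(\sqrt{y})^2 \leq 1$. In the transition zone $N \leq \sqrt{y} \leq 2N$, the smoothness and monotonicity properties of $m_N$ force $|m_N'(r)| \lesssim 1/N$ while $\sqrt{y} \sim N$ and $m_N \leq 1$, so once again $|G'(y)| \lesssim 1$. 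Combining the three cases gives $|\sigma| \lesssim 1$.

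The only real obstacle is spotting the divided-difference structure; once $\sigma$ is rewritten as $(G(a^2)-G(b^2))/(a^2-b^2)$, the case analysis on $G'$ is essentially immediate from the explicit form of the multiplier $m_N$.
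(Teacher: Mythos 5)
Your proof is correct, and it is a cleaner variant of what the paper does. The paper also treats $\sigma$ as a divided difference, but it first splits into frequency cases: when $N_2\ll N\lesssim N_1$ it bounds numerator and denominator separately (using $|\xi_1|^2-|\xi_2|^2\sim|\xi_1|^2$), and only when $N\lesssim N_2\le N_1$ does it apply the mean value theorem, to $f(r)=r^2m_N(r)^2$ in the variable $r=|\xi|$; that route produces $f'(\eta)/(|\xi_1|+|\xi_2|)$ and then needs the derivative bound $|f'(r)|\lesssim N^{2(1-s)}r^{2s-1}$ to be monotone increasing, which is where the hypothesis $s>\tfrac12$ enters. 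By passing to the squared variable $y=|\xi|^2$ you get $\sigma=G'(\eta)$ in one stroke, so a single uniform bound $\|G'\|_{L^\infty}\lesssim 1$ settles every case at once: no frequency case analysis, no monotonicity step, and in fact no use of $s>\tfrac12$ (your regime $\sqrt{y}\ge 2N$ gives $G'=s\,m_N^2\in[0,1]$ for any $s\in[0,1)$) and no need for the localization $N_1\gtrsim N$. The one point to flag is your transition-zone claim $|m_N'(r)|\lesssim 1/N$ on $N\le r\le 2N$: this does not literally follow from ``smooth, nonincreasing'' plus the boundary values (a smooth monotone interpolant could have a large derivative somewhere), but it is part of the standard construction of the multiplier (e.g.\ $m_N(\xi)=m_1(\xi/N)$ for a fixed profile), and the paper's own proof makes exactly the same implicit assumption when it asserts $|f'(r)|\lesssim N^{2(1-s)}r^{2s-1}$ for all $r\gtrsim N$ including $[N,2N]$. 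So this is a shared convention rather than a gap, and your argument stands.
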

\begin{proof}
Case 1. $N_2<<N\lesssim N_1$.
\be\label{3.5}|\sigma|=|\frac{|\xi_1|^2m_1^2-|\xi_2|^2}{|\xi_1|^2-|\xi_2|^2
}|\sim\frac{||\xi_1|^2m_1^2-|\xi_2|^2|}{|\xi_1|^2}\lesssim
m_1^2+\frac{N_2^2}{N_1^2}\lesssim1.\ee

Case 2. $N\lesssim N_2\leqslant N_1$.

\be\label{3.6}|\sigma|=|\frac{|\xi_1|^2m_1^2-|\xi_2|^2m_2^2}{|\xi_1|^2-|\xi
_2|^2}|=|\frac{f(|\xi_1|)-f(|\xi_2|)}{|\xi_1|^2-|\xi_2|^2}|,\ee
where $f(r)=r^2m_N(r)^2$ and $r\gtrsim N$. Thus $|f'(r)|\lesssim
N^{2(1-s)}r^{2s-1}$, which is monotone increasing w.r.t. $r\gtrsim
N$, because $s>\frac{1}{2}$. Hence,
\be\label{3.7}|\sigma|\lesssim|\frac{N^{2(1-s)}|\xi_1|^{2s-1}(|\xi_1|-|\xi_
2|)}{(|\xi_1|+|\xi_2|)(|\xi_1|-|\xi_2|)}|\lesssim
\frac{N^{2(1-s)}|\xi_1|^{2s-1}}{|\xi_1|}=(\frac{N}{|\xi_1|})^{2(1-s)}
\lesssim1.\ee
\end{proof}
This lemma implies $|m_1m_2-\sigma|\lesssim1$, and for $s > 1/2$ we
get \bea
I&\lesssim&\frac{1}{m(N_1)m(N_2)}\|Iu_1\|_{L_x^{2+}}\|Iu_2\|_{L_x^{\infty-}}\|n_
+
\|_{L_x^2}\nonumber\\
&\lesssim&(\frac{N_1}{N})^{1-s}((\frac{N_2}{N})^{1-s}+1)\frac{1}{N_1^{1-}}
\|Iu\|_{H^1}^2\|n_+\|_{L^2}\nonumber\\
&\lesssim&N^{-1+}N_1^{0-}\|Iu\|_{H^1}^2\|n_+\|_{L^2}\label{3}.\eea
Here and in the following we abuse notation and denote
$$ m(N_i) = \inf_{|\xi_i| \sim N_i} m(\xi_i) \sim \sup_{|\xi_i| \sim N_i}
m(\xi_i) \, .$$
This completes the proof of Proposition \ref{prop.3.1}.\\[1cm]
{\bf Proof of Proposition \ref{prop.3.2}:}
By system (\ref{1''}),(\ref{2''})
\bea &&\frac{d}{dt}\tilde{H}(u,n_+)(t)\nonumber\\
&=&-\int_{\sum_2}\xi_1m_1\cdot\xi_2m_2\hat{u}_t(\xi_1)\hat{\bar{u}}(\xi_2)-
\int_{\sum_2}\xi_1m_1\cdot\xi_2m_2\hat{u}(\xi_1)\hat{\bar{u}}_t(\xi_2)
\nonumber\\
&&+\frac{1}{2}\int_{\sum_2}\hat{n}_{+t}(\xi_1)\hat{\bar{n}}_{+}(\xi_2)+
\frac{1}{2}\int_{\sum_2}\hat{n}_{+}(\xi_1)\hat{\bar{n}}_{+t}(\xi_2)
\nonumber\\
&&+\frac{1}{2}\int_{\sum_3}\sigma\hat{u}_t(\xi_1)\hat{\bar{u}}(\xi_2)
(\hat{n}_++\hat{\bar{n}}_+)(\xi_3)+\frac{1}{2}\int_{\sum_3}\sigma\hat{u}
(\xi_1) \hat{\bar{u}}_t(\xi_2)(\hat{n}_++\hat{\bar{n}}_+)(\xi_3)\nonumber\\
&&+\frac{1}{2}\int_{\sum_3}\sigma\hat{u}(\xi_1)\hat{\bar{u}}(\xi_2)(\hat{n}
_{+t}+\hat{\bar{n}}_{+t})(\xi_3)\nonumber\\
&=&-\frac{i}{2}\int_{\sum_3}(1-\sigma)|\xi_3|\hat{u}(\xi_1)\hat{\bar{u}}
(\xi_2)\hat{\bar{n}}_+(\xi_3)
+\frac{i}{2}\int_{\sum_3}(1-\sigma)|\xi_3|\hat{u}(\xi_1)\hat{\bar{u}}
(\xi_2)\hat{n}_+(\xi_3)\nonumber\\
&&+2Im\int_{\sum_4}(\frac{|\xi_{23}|^2m_{23}^2-|\xi_2|^2m_2^2}{|\xi_{23}|^2
-|\xi_2|^2}-m_{23}^2)\hat{u}(\xi_1)\hat{\bar{u}}(\xi_2)\hat{n}(\xi_3)
\hat{n}(\xi_4)\nonumber\\
&=&-\frac{i}{2}\int_{\sum_3}(1-\sigma)|\xi_3|\hat{u}(\xi_1)\hat{\bar{u}}(
\xi_2)\hat{\bar{n}}_+(\xi_3)
+\frac{i}{2}\int_{\sum_3}(1-\sigma)|\xi_3|\hat{u}(\xi_1)\hat{\bar{u}}
(\xi_2)\hat{n}_+(\xi_3)\nonumber\\
&&+2Im\int_{\sum_4}\frac{|\xi_2|^2(m_{23}^2-m_2^2)}{|\xi_{23}|^2-|\xi_2|^2}
\hat{u}(\xi_1)\hat{\bar{u}}(\xi_2)\hat{n}(\xi_3)\hat{n}(\xi_4)\label{3.12},
\eea where $\xi_{ij}=\xi_i+\xi_j$, $m_{ij}=m_N(\xi_i+\xi_j)$, and
the expression for $\sigma$ is given above.

Integrating with respect to $t$ on $[0,\delta]$, we have
\bea&&\tilde{H}(u,n_+)(\delta)-\tilde{H}(u,n_+)(0)\nonumber\\
&=&-\frac{i}{2}\int_0^\delta\int_{\sum_3}(1-\sigma)|\xi_3|
\hat{u}(\xi_1)\hat{\bar{u}}(\xi_2)\hat{\bar{n}}_+(\xi_3)
+\frac{i}{2}\int_0^\delta\int_{\sum_3}(1-\sigma)|\xi_3|\hat{u}(\xi_1)
\hat{\bar{u}}(\xi_2)\hat{n}_+(\xi_3)\nonumber\\
&&+2Im\int_0^\delta\int_{\sum_4}\frac{|\xi_2|^2(m_{23}^2-m_2^2)}{|\xi_{23}|
^2-|\xi_2|^2}\hat{u}(\xi_1)\hat{\bar{u}}(\xi_2)\hat{n}(\xi_3)\hat{n}(\xi_4)
\label{3.13}.
\eea

Because the complex
conjugates play no role here, there are two kinds of terms
we have to deal with:

\be\label{3.14} II =
|\int_0^\delta\int_{\sum_3}(1-\sigma)|\xi_3|\hat{u}(\xi_1)
\hat{\bar{u}}(\xi_2)\hat{n}_+(\xi_3)|,\ee and \be\label{3.15} III =
|\int_0^\delta\int_{\sum_4}
\frac{|\xi_2|^2(m_{23}^2-m_2^2)}{|\xi_{23}|^2-|\xi_2|^2}\hat{u}(\xi_1)
\hat{\bar{u}}(\xi_2)\hat{n}_+(\xi_3) \hat{n}_+(\xi_4)|.\ee

First we prove
\be\ II \lesssim
N^{-\frac{1}{2}+}N_1^{0-}\delta^{\frac{1}{2}-}\|Iu\|_{X^{1,\frac{1}{2}+}}^2
\|n_+\|_{X^{0,\frac{1}{2}+}_+},\ee
where we can assume as above $N_2\leqslant N_1$, $N_1\gtrsim N$, and
$N_3\lesssim N_1$.

As $|1-\sigma|\lesssim 1$, \bea
II&\lesssim&N_3\frac{1}{m(N_1)m(N_2)}\|Iu_1Iu_2\|_{L^2_{t,x}}\|n_+\|_{L^2_{t,x}}
\nonumber\\
&\lesssim&N_3(\frac{N_1}{N})^{1-s}((\frac{N_2}{N})^{1-s}+1)(\frac{N_2}{N_1}
)^{\frac{1}{2}}\|Iu_1\|_{X^{0,\frac{1}{2}+}}\|Iu_2\|_{X^{0,\frac{1}{2}+}}\|
n_+\|_{X^{0,0}_+}\nonumber\\
&\lesssim&N_3(\frac{N_1}{N})^{1-s}((\frac{N_2}{N})^{1-s}+1)(\frac{N_2}{N_1}
)^{\frac{1}{2}}\frac{1}{N_1}\frac{1}{<N_2>}\delta^{\frac{1}{2}-}\|Iu\|_{X^{
1,\frac{1}{2}+}}^2\|n_+\|_{X^{0,\frac{1}{2}+}_+}\nonumber\\
&\lesssim&N^{-\frac{1}{2}-}N_1^{0-}\delta^{\frac{1}{2}-}\|Iu\|_{X^{1,\frac{
1}{2}+}}^2\|n_+\|_{X^{0,\frac{1}{2}+}_+}.\eea

Next we prove
\be\label{5}III
\lesssim(N^{-2+}+N^{-1+}\delta^{\frac{1}{2}+})\|Iu\|_{X^{1,\frac{1}{2}+}}^2
\|n_+\|_{X^{0,\frac{1}{2}+}}^2.\ee

If both $N_2$ and $N_3<<N$, then $m_{23}^2-m_2^2=0$, which is trivial.
Thus we suppose $N_2$ or $N_3\gtrsim N$.

Case 1. $N_2<<N_3$, and $N_3\gtrsim N$.

\be\label{6}|\frac{|\xi_2|^2(m_{23}^2-m_2^2)}{|\xi_{23}|^2-|\xi_2|^2}|
\lesssim\frac{|\xi_2|^2}{||\xi_2+\xi_3|^2-|\xi_2|^2|}\lesssim\frac{|\xi_2|^
2}{| \xi_3|^2}.\ee

Since $N_2<<N_3$ and $\xi_1+\xi_2+\xi_3+\xi_4=0$,
$N_4\lesssim\max\{N_1,\ N_3\}$, and $N_{max}\lesssim\max\{N_1,\
N_3\}$.

Subcase 1.1. $N_1<<N$.

So $N_{max}\sim N_3$.

\bea
III&\lesssim&\frac{N_2^2}{N_3^2}\frac{1}{m(N_1)m(N_2)}\|Iu_1\|_{L^{\infty-}_{t,x
}
}\|Iu_2\|_{L^{2+}_tL^{\infty-}_x}\|n_{+3}\|_{L^2_tL^{2+}_x}
\|n_{+4}\|_{L^\infty_tL^2_x}\nonumber\\
&\lesssim&\frac{N_2^2}{N_3^2}((\frac{N_2}{N})^{1-s}+1)\|Iu_1\|_{L^{\infty-}
_tH^1_x}\|Iu_2\|_{X^{0,\frac{1}{2}+}}\|n_{+3}\|_{L^2_tH^{0+}_x}\|n_{+4}\|_{
X^{0,\frac{1}{2}+}_+}\nonumber\\
&\lesssim&\frac{N_2^2}{N_3^2}((\frac{N_2}{N})^{1-s}+1)\frac{1}{<N_2>}N_3^{0
+}\delta^{\frac{1}{2}-}\|Iu\|_{X^{1,\frac{1}{2}+}}^2\|n_+\|_{X^{0,\frac{1}{
2}+}_+}^2\nonumber\\
&\lesssim&N^{-1+}\delta^{\frac{1}{2}-}N_{max}^{0-}\|Iu\|_{X^{1,\frac{1}{2}+
}}^2\|n_+\|_{X^{0,\frac{1}{2}+}_+}^2\label{7}.
\eea

Subcase 1.2. $N_1\gtrsim N$.

\bea
III&\lesssim&\frac{N_2^2}{N_3^2}\frac{1}{m(N_1)m(N_2)}\|Iu_1\|_{L^{2+}_t
L^{\infty
-}_x}\|Iu_2\|_{L^{2+}_tL^{\infty-}_x}\|n_{+3}\|_{L^{\infty-}_tL^{2+}_x}\|n_
{+4}\|_{L^\infty_tL^2_x}\nonumber\\
&\lesssim&\frac{N_2^2}{N_3^2}(\frac{N_1}{N})^{1-s}((\frac{N_2}{N})^{1-s}+1)
\|Iu_1\|_{X^{0,\frac{1}{2}+}}\|Iu_2\|_{X^{0,\frac{1}{2}+}}
\|n_{+3}\|_{L^{\infty-}_tH^{0+}_x}\|n_{+4}\|_{X^{0,\frac{1}{2}+}_+}
\nonumber\\
&\lesssim&\frac{N_2^2}{N_3^2}(\frac{N_1}{N})^{1-s}((\frac{N_2}{N})^{1-s}+1)
\frac{1}{N_1}\frac{1}{<N_2>}N_3^{0+}\|Iu\|_{X^{1,\frac{1}{2}+}}^2\|n_+\|_{X
^{0,\frac{1}{2}+}_+}^2\nonumber\\
&\lesssim&N^{-2+}N_{max}^{0-}\|Iu\|_{X^{1,\frac{1}{2}+}}^2\|n_+\|_{X^{0,
\frac{1}{2}+}_+}^2.\label{8}
\eea

Case 2. $N_3<<N_2$, $N_2\gtrsim N$.

So \be\label{9}
|\frac{|\xi_2|^2(m_{23}^2-m_2^2)}{|\xi_{23}|^2-|\xi_2|^2}|=|\frac{|\xi_2|^2
(m_N(\xi_2+\xi_3)^2-m_N(\xi_2)^2)}{(|\xi_{23}|+|\xi_2|)||\xi_{23}|
-|\xi_2||)}|\lesssim\frac{|\xi_2|^2
N^{2(1-s)}|\xi_2|^{2s-3}||\xi_{23}|-|\xi_2||}{|\xi_2| ||\xi_{23}| -
|\xi_2||} \lesssim m_2^2,
\ee
and $N_4\lesssim\max\{N_1,\ N_2\}$, $N_{max}\lesssim\max\{N_1,\
N_2\}$.

Subcase 2.1. $N_1<<N$.

So $N_{max}\sim N_2$.

As above
\bea
III&\lesssim&m(N_2)\frac{1}{m(N_1)m(N_2)}\|Iu_1\|_{L^{\infty-}_{t,x}}
\|Iu_2\|_{L^{2+
}_tL^{\infty-}_x}\|n_{+3}\|_{L^2_tL^{2+}_x}\|n_{+4}\|_{L^\infty_tL^2_x}
\nonumber\\
&\lesssim&\frac{1}{N_2}N_3^{0+}\delta^{\frac{1}{2}-}
\|Iu\|_{X^{1,\frac{1}{2}+}}^2\|n_+\|_{X^{0,\frac{1}{2}+}_+}^2\nonumber\\
&\lesssim&N^{-1+}\delta^{\frac{1}{2}-}N_{max}^{0-}\|Iu\|_{X^{1,\frac{1}{2}+
}}^2\|n_+\|_{X^{0,\frac{1}{2}+}_+}^2\label{10}.\eea

Subcase 2.2. $N_1\gtrsim N$.

As in subcase 1.2, \bea
III&\lesssim&m(N_2)\frac{1}{m(N_1)m(N_2)}\|Iu_1\|_{L^{2+}_tL^{\infty-}_x}
\|Iu_2\|_{L
^{2+}_tL^{\infty-}_x}\|n_{+3}\|_{L^{\infty-}_tL^{2+}_x}\|n_{+4}\|_{L^\infty
_tL^2_x}\nonumber\\
&\lesssim&(\frac{N_1}{N})^{1-s}\frac{1}{N_1}\frac{1}{N_2}N_3^{0+}\|Iu\|_{X^
{1,\frac{1}{2}+}}^2\|n_+\|_{X^{0,\frac{1}{2}+}_+}^2\nonumber\\
&\lesssim&N^{-(1-s)}N_2^{-1+}N_1^{-s}\|Iu\|_{X^{1,\frac{1}{2}+}}^2\|n_+\|_{
X^{0,\frac{1}{2}+}_+}^2\nonumber\\
&\lesssim&N^{-2+}N_{max}^{0-}\|Iu\|_{X^{1,\frac{1}{2}+}}^2
\|n_+\|_{X^{0,\frac{1}{2}+}_+}^2.\label{11}
\eea

Case 3 $N_2\sim N_3\gtrsim N$.

Hence $N_4\lesssim\max\{N_1,\ N_2,\ N_3\}\sim\max\{N_1,\ N_2\}$, and
$N_{max}\lesssim\max\{N_1,\ N_2\}$.

We have
$$|\frac{|\xi_2|^2(m_{23}^2-m_2^2)}{|\xi_{23}|^2-|\xi_2|^2}|=|\frac{|\xi_{23
}|^2m_{23}^2-|\xi_2|^2 m_2^2}{|\xi_{23}|^2-|\xi_2|^2}
-m_{23}^2|\lesssim|\frac{|\xi_{23}|^2 m_{23}^2-|\xi_2|^2
m_2^2}{|\xi_{23}|^2-|\xi_2|^2}|+m_{23}^2 \, .$$
By the proof of Lemma \ref{lem3.1}, the above expression is
bounded.

Subcase 3.1. $ N_1 \gtrsim N$.

\bea III&\lesssim&\frac{1}{m(N_1)m(N_2)}\|Iu_1\|_{L^{2+}_tL^{\infty-}_x}
\|Iu_2\|_{L^{2+}_tL^{\infty-}_x}\|n_{+3}\|_{L^{\infty-}_tL^{2+}_x}\|n_{+4}
\|_{L^\infty_tL^2_x}\nonumber\\
&\lesssim&(\frac{N_1}{N})^{1-s}(\frac{N_2}{N})^{1-s}\frac{1}{N_1}
\frac{1}{N_2}N_3^{0+}\|Iu\|_{X^{1,\frac{1}{2}+}}^2\|n_+
\|_{X^{0,\frac{1}{2}+}_+}^2 \nonumber\\
&\lesssim&N^{-2+}N_{max}^{0-}\|Iu\|_{X^{1,\frac{1}{2}+}}^2\|n_+
\|_{X^{0,\frac{1}{2}+}_+}^2.\label{12}\eea

Subcase 3.2. $ N_1 \lesssim N$.

Thus $ N_{max} \lesssim N_2$.
\bea
III & \lesssim & \frac{1}{m(N_1) m(N_2)} \|Iu_1\|_{L_{t,x}^{\infty -}}
\|n_{+3}\|_{{L_t^{2+}} L_x^{2+}} \|Iu_2\|_{L^{2+}_t L^{\infty}_x}
\|n_{+4}\|_{L_t^{\infty} L_x^2} \nonumber \\
& \lesssim & (\frac{N_2}{N})^{1-s} \|Iu_1\|_{X^{1,\frac{1}{2}+}}
\|n_{+3}\|_{X_+^{0+,0+}} \|Iu_2\|_{X^{0+,\frac{1}{2}+}}
\|n_{+4}\|_{X^{0,\frac{1}{2}+}_+} \nonumber \\
& \lesssim & (\frac{N_2}{N})^{1-s} N_3^{0+} N_2^{-1+} \delta^{\frac{1}{2}-}
\|Iu\|_{X^{1,\frac{1}{2}+}}^2 \|n_+\|_{X^{0,\frac{1}{2}+}_+}^2 \nonumber \\
& \lesssim & N^{-1+} N_{max}^{0-} \delta^{\frac{1}{2}-}
\|Iu\|_{X^{1,\frac{1}{2}+}}^2 \|n_+\|_{X^{0,\frac{1}{2}+}_+}^2.
\nonumber \eea


\section{Proof of Theorem \ref{thm}}

\begin{proof}
The data satisfy the estimate
$$ \|Iu_0\|_{H^1}  \le  c N^{1-s}\|u_0\|_{H^s} \, . $$
We use our local existence theorem on $[0,\delta]$, where
$$ \delta \sim \frac{1}{(\|Iu_0\|_{H^1} + \|n_{+0}\|_{L^2} +
\|n_{-0}\|_{L^2})^{2+}N^{0+}}. $$ and conclude
\begin{eqnarray}
\nonumber
\lefteqn{ \|Iu\|_{X^{1,\frac{1}{2}+}[0,\delta]} +
\|n_+\|_{X^{0,\frac{1}{2}+}_+[0,\delta]} +
\|n_-\|_{X^{0,\frac{1}{2}+}_-[0,\delta]} } \\
&& \le  c(\|Iu_0\|_{H^1} + \|n_{+0}\|_{L^2} + \|n_{-0}\|_{L^2}) \le c_2
N^{1-s} \label{**}.
\end{eqnarray}
From (\ref{2.1''}) we get
$$ H(Iu_0,n_{+0}) \le c_0 (\|Iu_0\|_{H^1}^2 + \|n_{+0}\|_{L^2}^2) \le
\overline{c} N^{2(1-s)}, $$ and from (\ref{2.3''})
$$ \|\Lambda Iu_0\|_{L^2}^2 + \|n_{+0}\|_{L^2}^2 + \|n_{-0}\|_{L^2}^2 \le
\widehat{c} N^{2(1-s)} \quad , \quad \|Iu_0\|_{L^2} \le
\|u_0\|_{L^2} =: M $$ with $ \widehat{c} = \widehat{c}(\overline{c})
$. Thus the constant in (\ref{**}) depends only on $\overline{c}$
and $M$, i.e. $c_2 = c_2(\overline{c},M)$.

In order to reapply the local existence result with time intervals
of equal length we need a uniform bound of the solution at time
$t=\delta$ and $t=2\delta$ etc. which follows from a uniform control
over the energy by (\ref{2.3''}). The increment of the energy is
controlled by Proposition \ref{prop.3.1} and Proposition \ref{prop.3.2} as
follows:
\begin{eqnarray*}
\lefteqn{ |H(Iu(\delta),n_+(\delta)) - H(Iu_0,n_{+0})| } \\
& \le & |H(Iu(\delta),n_+(\delta)) - \tilde{H}(u(\delta),n_+(\delta))| \\
& &  +|\tilde{H}(u(\delta),n_+(\delta)) - \tilde{H}(u_0,n_{+0})| + |
\tilde{H}(u_0,n_{+0}) - H(Iu_0,n_{+0})| \\
& \le & c [N^{-1+} \|Iu(\delta)\|_{H^1}^2 \|n_+(\delta)\|_{L^2} +
N^{-\frac{1}{2}+} \delta^{\frac{1}{2}+}
\|n_+\|_{X^{0,\frac{1}{2}+}_+[0,\delta]}
\|Iu\|_{X^{1,\frac{1}{2}}[0,\delta]}^2 \\
&& + (N^{-2+} + N^{-1+} \delta^{\frac{1}{2}-})
\|n_+\|_{X^{0,\frac{1}{2}+}_+[0,\delta]}^2
\|Iu\|_{X^{1,\frac{1}{2}}[0,\delta]}^2
 + N^{-1+} \|Iu_0\|_{H^1}^2 \|n_{+0}\|_{L^2}].
\end{eqnarray*}
Using (\ref{**}) and the definition of $\delta$ we arrive at
\begin{eqnarray*}
\lefteqn{ |H(Iu(\delta),n_+(\delta)) - H(Iu_0,n_{+0})| } \\
& \hspace{-0.2 cm} \le & \hspace{-0.2 cm} c_3 (N^{-1+} N^{3(1-s)}+
N^{-\frac{1}{2}+}N^{-(1-s)+} N^{3(1-s)} + (N^{-2+} + N^{-1+}
N^{-(1-s)+})N^{4(1-s)}).
\end{eqnarray*}
where $ c_3 = c_3(\overline{c},M) $. This is easily seen to be
bounded by $ \overline{c}N^{2(1-s)}$ (for large $N$).

The number of iteration steps to reach the given time $T$ is
$\frac{T}{\delta} \sim T N^{2(1-s)+} $. This means that in order to
give a uniform bound of the energy of the iterated solutions, namely
by $2\overline{c}N^{2(1-s)}$, from the last inequality the following
condition has to be fulfilled:
$$  c_3 TN^{2(1-s) +} (N^{-1+} N^{3(1-s)}+
N^{-\frac{1}{2}+}N^{-(1-s)+} N^{3(1-s)} + (N^{-2+} + N^{-1+}
N^{-(1-s)+})N^{4(1-s)}) < \overline{c} N^{2(1-s)} $$ where $c_3 =
c_3(2\overline{c},2M)$ (recall here that the initial energy is
bounded by $\overline{c}N^{2(1-s)}$).

One easily checks that this can be fulfilled by choosing $N\sim
T^{\frac{1}{2s-\frac{3}{2}}+}>>1$ provided $s > 3/4 $. So here is
the point where the decisive bound on $s$ appears.

A uniform bound of the energy implies by (\ref{2.3'}) uniform
control of
$$ \|\Lambda Iu(t)\|_{L^2} + \|n(t)\|_{L^2} + \|\Lambda^{-1}n_t(t)\|_{L^2} \le c
N^{1-s}.$$
Moreover $ \|Iu(t)\|_{L^2} \le \|u(t)\|_{L^2} = \|u_0\|_{L^2} $,
thus
$$ \|u(t)\|_{H^s} + \|n(t)\|_{L^2} + \|\Lambda^{-1} n_t(t)\|_{L^2} \le c
N^{1-s}.$$

This implies
$$ \sup_{0\le t \le T} (\|u(t)\|_{H^s} + \|n(t)\|_{L^2} +
\|\Lambda^{-1}n_t(t)\|_{L^2}) \le c
(1+T)^{\frac{1-s}{2s-\frac{3}{2}}+} .$$

\end{proof}

\begin{rm}

\end{rm}
\end{document}